\newtheorem{theorem}{Theorem}[section]
\theoremstyle{definition}
\newtheorem{definition}[theorem]{Definition}
\newtheorem{prop}[theorem]{Proposition}
\newtheorem{lem}[theorem]{Lemma}
\newtheorem{cor}[theorem]{Corollary}
\newtheorem{quest}[theorem]{Question}
\theoremstyle{remark}
\newcommand{\SL}{\mbox{\rm SL}}
\newcommand{\Sp}{\mbox{\rm Sp}}
\newcommand{\Tr}{\mbox{\rm Tr}}
\numberwithin{equation}{section}
\begin{document}\large

\title{Derivations of Siegel modular forms from connections}

%    Information for first author
\author{Enlin Yang}
%    Address of record for the research reported here
\address{Department of Mathematical Science, Tsinghua University, Beijing, P. R. China 100084}
%    Current address
%\curraddr{Department of Mathematics and Statistics,
%Case Western Reserve University, Cleveland, Ohio 43403}
\email{yangenlin0727@126.com}
%    \thanks will become a 1st page footnote.
%\thanks{The first author was supported in part by NSF Grant \#000000.}

%    Information for second author
\author{Linsheng Yin}
\address{Department of Mathematical Science, Tsinghua University, Beijing, P. R. China 100084}
\email{lsyin@math.tsinghua.edu.cn}
%\thanks{Support information for the second author.}

%    General info
%\subjclass[2000]{Primary 54C40, 14E20; Secondary 46E25, 20C20}

%\date{August 15, 2011 and, in revised form, June 22, 2001.}

%\dedicatory{This paper is dedicated to our advisors.}

\keywords{Levi-Civita connection, Siegel modular form, differential operator}

\begin{abstract}
We introduce a method in differential geometry to study the
derivative operators of Siegel modular forms. By determining the
coefficients of the invariant Levi-Civita connection on a Siegel
upper half plane, and further by calculating the expressions of the
differential forms under this connection, we get a non-holomorphic
derivative operator of the Siegel modular forms. In order to get a
holomorphic derivative operator, we introduce a
weaker notion, called modular connection, on the Siegel upper half
plane than a connection in differential geometry. Then we show that
on a Siegel upper half plane there exists at most one holomorphic
modular connection in some sense, and get a possible holomorphic
derivative operator of Siegel modular forms.
\end{abstract}

\maketitle

\section*{Introduction}
In this paper, we introduce a differential geometric method to study the
derivative operators of Siegel modular forms, which, theoretically,
may be applied to the study of the derivative operators of any
automorphic form. Our idea comes from the observation on the two
derivative operators of the classical modular forms constructed by
combinations. It is well-known \cite{Zagier} that if $f$ is a
modular forms of weight $2k$, then
$D_kf:=\frac{df}{dz}-\frac{\sqrt{-1}k}{y}f$ is a non-holomorphic
modular forms of weight $2k+2$, and
$D_kf:=\frac{df}{dz}-\sqrt{-1}kG_2(z)f$, due to J.-P. Serre \cite{Serre}, is a
holomorphic modular forms of weight $2k+2$, where $G_2(z)$ is the
Eisenstein series of weight 2. We notice that the first operator can
be constructed by the Levi-Civita connection corresponding to the
invariant metric in the classical upper half plane, but the second
can not be constructed from any connection. However, if we loosen some
condition in the definition of the connection and define a concept
called modular connection, we can get Serre's holomorphic derivative
from the unique holomorphic modular connection on the upper half plane.
In this paper we
extend these results to Siegel upper half planes and Siegel modular
forms. We determine the coefficients of the Levi-Civita
connection corresponding to the invariant metric in a Siegel upper
half plane, and compute the expressions of the differential
forms under the connection, which give us a non-holomorphic
derivative operator of Siegel modular forms. Our main results are as follows.

Let $\mathbb H_g$ be the Siegel upper plane of degree $g$,
$\{dZ_{ij}:1\le i,j\le g\}$ a series of coordinates on $\mathbb H_g$,
$\Gamma_g=\Sp(2g,\mathbb Z)$
the full Siegel modular group which acts on $\mathbb H_g$ naturally, $M_k=M_k(\Gamma_g)$ the vector
space of the classical (or scalar-valued) Siegel modular forms of weight $k$,
$\widetilde{M}_k=\widetilde{M}_{k}(\Gamma_g)$ the $\mathbb C^\infty$-Siegel modular forms of weight $k$. Put
\[
\frac{\partial}{\partial Z}=(\partial_{ij})_{g\times g}\qquad\text{and}\qquad \partial_{ij}
=\frac{1}{2^{1-\delta(i,j)}}\cdot\frac{\partial}{\partial Z_{ij}},
\]
where $\delta(i,j)=1$ if $i=j$ and $\delta(i,j)=0$ if $i\neq j$.
\begin{theorem}[See theorem \ref{mainthm}]\label{thm1} Let $f\in M_{2k}(\Gamma_g)$. Then
\[
\det\left(\left[\frac{\partial}{\partial Z}-\sqrt{-1}kY^{-1}\right]f\right)\in \widetilde{M}_{2gk+2}(\Gamma_g).
\]
\end{theorem}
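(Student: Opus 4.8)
The plan is to reduce everything to a single matrix transformation law and then take determinants. Write $L=\frac{\partial}{\partial Z}-\sqrt{-1}\,kY^{-1}$ for the operator in brackets, let $\gamma=\begin{pmatrix}A&B\\C&D\end{pmatrix}\in\Gamma_g$, and set $Z^{*}=\gamma Z=(AZ+B)(CZ+D)^{-1}$ and $M=CZ+D$. Since $Lf$ is a $g\times g$ matrix-valued function, the assertion $\det(Lf)\in\widetilde M_{2gk+2}(\Gamma_g)$ follows once I establish
\[
(Lf)(Z^{*})=\det(M)^{2k}\,M\,(Lf)(Z)\,{}^{t}M .
\]
Indeed, taking determinants and using $\det{}^{t}M=\det M$ turns the scalar factor $\det(M)^{2k}$ into $\det(M)^{2gk}$ and the conjugation $M(\,\cdot\,){}^{t}M$ into a further $\det(M)^{2}$, so $\det(Lf)(Z^{*})=\det(M)^{2gk+2}\det(Lf)(Z)$, which is precisely weight $2gk+2$ (for $g=1$ this reduces to the classical $2k+2$). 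Smoothness of $\det(Lf)$ is immediate, and its growth at the cusps is inherited from $f$.

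To prove the displayed law I would first transform the holomorphic part $\partial f/\partial Z$ by differentiating the modularity relation $f(Z^{*})=\det(M)^{2k}f(Z)$ in two ways. On one side, the symmetric-matrix convention is arranged so that $d\phi=\Tr\!\big((\partial\phi/\partial Z)\,dZ\big)$ with $\partial\phi/\partial Z$ symmetric; combining this with the classical Jacobian identity $d(\gamma Z)={}^{t}M^{-1}\,dZ\,M^{-1}$ gives, by the chain rule,
\[
\frac{\partial}{\partial Z}\big[f(Z^{*})\big]=M^{-1}\Big(\frac{\partial f}{\partial Z}\Big)(Z^{*})\,{}^{t}M^{-1}.
\]
On the other side, the product rule and Jacobi's formula yield $\frac{\partial}{\partial Z}\det(M)^{2k}=2k\det(M)^{2k}\,M^{-1}C$, where $M^{-1}C$ is symmetric thanks to the symplectic relation $C\,{}^{t}D=D\,{}^{t}C$. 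Equating the two computations and clearing $M$ on the left and ${}^{t}M$ on the right produces
\[
\Big(\frac{\partial f}{\partial Z}\Big)(Z^{*})=\det(M)^{2k}\Big[M\,\frac{\partial f}{\partial Z}(Z)\,{}^{t}M+2k\,f(Z)\,C\,{}^{t}M\Big].
\]

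Finally I would incorporate the non-holomorphic term. From $Y^{*}={}^{t}\overline{M}^{-1}\,Y\,M^{-1}$ with $\overline{M}=C\overline{Z}+D$, hence $(Y^{*})^{-1}=M\,Y^{-1}\,{}^{t}\overline{M}$, together with $f(Z^{*})=\det(M)^{2k}f(Z)$, one gets $(Y^{*})^{-1}f(Z^{*})=\det(M)^{2k}M\,Y^{-1}\,{}^{t}\overline{M}\,f(Z)$. Subtracting $\sqrt{-1}\,k$ times this from the previous display and comparing with $\det(M)^{2k}M(Lf)(Z){}^{t}M$, the entire claim collapses to the single identity $2C\,{}^{t}M=\sqrt{-1}\,M\,Y^{-1}\,{}^{t}(\overline{M}-M)$. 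Here $\overline{M}-M=C(\overline{Z}-Z)=-2\sqrt{-1}\,CY$, so the right-hand side simplifies to $2M\,{}^{t}C$, and the identity becomes $C\,{}^{t}M=M\,{}^{t}C$, i.e. once more $C\,{}^{t}D=D\,{}^{t}C$. This exact cancellation of the non-holomorphic contributions—where the coefficient $\sqrt{-1}\,k$ is forced and the symplectic relations do the work—is the real content; I expect the only genuine difficulty to be bookkeeping the symmetric-matrix conventions (the normalizing factors $2^{1-\delta(i,j)}$) correctly, rather than anything conceptual. With the matrix law in hand, taking determinants completes the proof.
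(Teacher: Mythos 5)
Your proof is correct, but it takes a genuinely different route from the paper's proof of Theorem \ref{mainthm}. You obtain the pointwise matrix law $(Lf)(\gamma Z)=\det(M)^{2k}\,M\,(Lf)(Z)\,M^t$ (with $M=CZ+D$ and $L=\frac{\partial}{\partial Z}-\sqrt{-1}\,kY^{-1}$) by direct computation: differentiate the modularity relation $f(\gamma Z)=\det(M)^{2k}f(Z)$ using $d(\gamma Z)=(M^t)^{-1}dZ\,M^{-1}$ and Jacobi's formula, with the symplectic relation $CD^t=DC^t$ making $M^{-1}C$ symmetric so the trace convention for $\frac{\partial}{\partial Z}$ applies, and then cancel the non-holomorphic terms via $\overline M-M=-2\sqrt{-1}\,CY$, which reduces once more to $CD^t=DC^t$; taking determinants gives weight $2gk+2$. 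All of these steps check out, including your handling of the normalization $\partial_{ij}=\frac{1+\delta(i,j)}{2}\frac{\partial}{\partial Z_{ij}}$. The paper never differentiates the modularity relation: it derives the same transformation law from its connection formalism --- Lemma \ref{lemma:mainLemma} (the Levi-Civita coefficients), Proposition \ref{prop:DfdZK} (the identity $D(f\det(dZ)^k)=\Tr(Lf\,dZ)\det(dZ)^k$), and Lemma \ref{lemma:modularconnectionBasic1} ($\Gamma_g$-invariance of that form) --- and then, after dividing by $f$, reads off $h(\gamma Z)=(CZ+D)h(Z)(ZC^t+D^t)$ for $h=\frac1f Lf$, which is exactly your law divided by $f(\gamma Z)$. (Your computation for the $Y^{-1}$ term is in substance the unlabeled lemma just before Theorem \ref{mainthm1}, which the paper uses only for the generalization, not for Theorem \ref{mainthm}.) As for what each approach buys: yours is elementary and self-contained, needs none of the Christoffel-symbol computations of Section 3, avoids dividing by $f$ (so no care is required at the zero locus of $f$, which the paper's ``$0\neq f$'' step quietly passes over), and makes evident that $\sqrt{-1}\,Y^{-1}$ may be replaced by any symmetric $G$ with the same transformation behavior --- the derivative-operator half of Theorem \ref{mainthm1}. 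The paper's route, in exchange, explains the geometric origin of the operator (the invariant metric on $\mathbb H_g$) and sets up the invariant-form machinery that powers the Rankin-Cohen-type brackets and the uniqueness assertion of Theorem \ref{mainthm1}.
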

Here $\Gamma_g$ can be replaced by any congruence subgroup and the weight $2k$ can also be replaced by any
positive integer, with a little modification of our proofs.

To get a holomorphic derivative operator, we introduce the notion
of modular connections on the Siegel upper half plane, whose
condition is weaker than the classical definition of the
connections. Then we show the following result.
\begin{theorem}[See theorem \ref{mainthm1}]\label{thm2}
Any symmetric $g\times g$ matrix $G(Z)=(G_{ij}(Z))$ consisting of $\mathbb C^\infty$ functions on $\mathbb H_g$,
which satisfies the transformation formula
\[
(CZ+D)^{-1}G(\gamma(Z))=G(Z)\cdot(CZ+D)^t+2C^t
\]
for any $\gamma=\left(\begin{array}{cc} A & B\\ C& D\end{array}\right)\in\Sp(2g,\mathbb Z)$,
gives a unique modular connection $\mathbb D$ such that for any $\mathbb C^\infty$-function $f$ on $\mathbb H_g$
\[
\mathbb D(dZ_{rs})=-\sum_{i,j=1}^gG_{ij}dZ_{si}dZ_{rj}
\quad\text{and}\quad
\mathbb D(f(\det(dZ)^k)=\Tr\left(\left[\frac{\partial}{\partial
Z}-kG\right]f dZ\right)(\det(dZ))^k,
\]
and thus gives a derivative operator $M_{2k}\rightarrow\widetilde{M}_{2kg+2}$ by
$f\mapsto\det\left(\left[\frac{\partial}{\partial
Z}-kG\right]f\right)$.
Furthermore, there exists at most one holomorphic
symmetric matrix $G$ to satisfy the transformation formula. If such a $G$ exists,
the operator corresponding to $G$ is holomorphic.
\end{theorem}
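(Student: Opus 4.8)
The plan is to treat $\mathbb D$ as a graded derivation on the commutative algebra generated over the ring of $\mathbb C^\infty$-functions by the one-form symbols $dZ_{ij}$; these must be taken to commute rather than anticommute, since for anticommuting symbols the prescribed value would satisfy $\mathbb D(dZ_{rs})=-\mathbb D(dZ_{sr})$, inconsistent with $dZ_{rs}=dZ_{sr}$. I would first \emph{define} $\mathbb D$ on generators by $\mathbb Df=\Tr(\frac{\partial}{\partial Z}f\,dZ)$ on functions and by the matrix identity $\mathbb D(dZ)=-\,dZ\,G\,dZ$ on one-forms, which is exactly $\mathbb D(dZ_{rs})=-\sum_{i,j}G_{ij}dZ_{si}dZ_{rj}$, and then extend by the Leibniz rule. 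Uniqueness is immediate, since a modular connection is determined by its values on generators; for existence one checks that the prescription respects the symmetry $dZ_{rs}=dZ_{sr}$ (which holds precisely because $G$ is symmetric and the $dZ_{ij}$ commute) and is $\Gamma_g$-equivariant (which is where the transformation formula for $G$ is used). The second displayed formula then follows from the Jacobi formula for the derivative of a determinant applied to $\mathbb D$: one gets $\mathbb D(\det dZ)=\det(dZ)\,\Tr((dZ)^{-1}\mathbb D(dZ))=-\Tr(G\,dZ)\det(dZ)$, and hence, by Leibniz, $\mathbb D(f(\det dZ)^k)=\Tr(\frac{\partial}{\partial Z}f\,dZ)(\det dZ)^k-kf\,\Tr(G\,dZ)(\det dZ)^k=\Tr([\frac{\partial}{\partial Z}-kG]f\,dZ)(\det dZ)^k$.

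Next I would show the determinant operator lands in $\widetilde M_{2kg+2}$. Writing $\mathcal{D}f=[\frac{\partial}{\partial Z}-kG]f$ for the resulting matrix-valued function, the heart of the matter is the covariance
\[
(\mathcal{D}f)(\gamma(Z))=\det(CZ+D)^{2k}\,(CZ+D)\,(\mathcal{D}f)(Z)\,(CZ+D)^t,
\]
after which taking determinants contributes $\det(CZ+D)^{2kg}$ from the scalar factor and two further factors $\det(CZ+D)$ from the conjugating matrices, for total weight $2kg+2$. This covariance is proved exactly as in Theorem~\ref{mainthm}: differentiate $f(\gamma(Z))=\det(CZ+D)^{2k}f(Z)$ using the chain rule and the Siegel Jacobian $d(\gamma(Z))={}^t(CZ+D)^{-1}\,dZ\,(CZ+D)^{-1}$; the derivative of the automorphy factor $\det(CZ+D)^{2k}$ produces an anomalous term proportional to $k$ times its logarithmic derivative, and this is cancelled precisely by the inhomogeneous summand $2C^t$ in the transformation formula for $G$. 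Only the transformation law of $G$, not any holomorphy, is used here, so both $G=\sqrt{-1}\,Y^{-1}$ and any holomorphic solution yield a map $M_{2k}\to\widetilde M_{2kg+2}$.

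For the uniqueness of a holomorphic $G$ I would set $H=G-G'$ for two holomorphic solutions; subtracting the two transformation formulas cancels $2C^t$ and leaves $H(\gamma(Z))=(CZ+D)H(Z)(CZ+D)^t$, so $H$ is a holomorphic symmetric-matrix-valued Siegel modular form (weight $\mathrm{Sym}^2$ of the standard representation), and the goal is $H\equiv0$. Taking $\gamma=\left(\begin{smallmatrix}I&S\\0&I\end{smallmatrix}\right)$ gives $H(Z+S)=H(Z)$ and hence a Fourier expansion $H(Z)=\sum_{T\ge0}a(T)e^{2\pi\sqrt{-1}\,\Tr(TZ)}$, the restriction to $T\ge0$ coming from the Koecher principle for $g\ge2$ and from holomorphy at the cusp for $g=1$. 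Taking $\gamma=\left(\begin{smallmatrix}U&0\\0&{}^tU^{-1}\end{smallmatrix}\right)$ with $U\in\GL(g,\mathbb Z)$ forces the relation $a({}^tUTU)={}^tU\,a(T)\,U$; at $T=0$ this reads $a(0)={}^tU\,a(0)\,U$ for all such $U$, which already forces $a(0)=0$. Eliminating the remaining coefficients requires the full symplectic group, and I expect this to be the main obstacle: it amounts to the vanishing of holomorphic vector-valued Siegel modular forms of the singular weight $\mathrm{Sym}^2$, which I would deduce from the functional equation under $Z\mapsto -Z^{-1}$ together with the theory of singular forms (for $g=1$ it is simply $M_2(\Gamma_1)=0$). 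Once uniqueness is in hand, holomorphy of the operator for a holomorphic $G$ is immediate: if $f$ and $G$ are both holomorphic then every entry of $[\frac{\partial}{\partial Z}-kG]f$ is holomorphic, so its determinant is holomorphic, and being already modular of weight $2kg+2$ it lies in $M_{2kg+2}$.
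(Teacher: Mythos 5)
Your construction and the operator part are sound, and in places cleaner than the paper's own treatment: defining $\mathbb D$ on generators by the matrix identity $\mathbb D(dZ)=-\,dZ\,G\,dZ$ and extending by Leibniz, then getting $\mathbb D(\det dZ)=-\Tr(G\,dZ)\det(dZ)$ from the adjugate/Jacobi formula, reproduces Propositions \ref{prop:Ddz} and \ref{prop:DfdZK} with $\sqrt{-1}Y^{-1}$ replaced by $G$; and your direct chain-rule proof of the covariance $(\mathcal Df)(\gamma(Z))=\det(CZ+D)^{2k}(CZ+D)(\mathcal Df)(Z)(CZ+D)^t$ is a legitimate, more elementary substitute for the paper's route, which instead deduces this from the $\Gamma_g$-invariance of $\mathbb D(f\det(dZ)^k)$ (Lemma \ref{lemma:modularconnectionBasic1}) combined with the transformation of $\det(dZ)^k$ and of $d(\gamma Z)$. (The cancellation you invoke does work, because $(CZ+D)^{-1}C$ is symmetric and $CD^t=DC^t$ for symplectic $\gamma$; the same identities make your asserted but unperformed equivariance check of $\mathbb D$ go through, at a level of detail comparable to the paper's ``use the same method as in Lemma \ref{lemma:mainLemma}''.)

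The genuine gap is in the uniqueness of a holomorphic $G$, which is part of the statement. You correctly reduce to showing that a holomorphic symmetric matrix $H=G-G'$ satisfying $H(\gamma(Z))=(CZ+D)H(Z)(CZ+D)^t$ must vanish identically; this is exactly the paper's reduction, since such an $H$ is the same datum as a $\Gamma_g$-invariant holomorphic $1$-form $\Tr(H\,dZ)$. But your argument only kills the constant Fourier coefficient $a(0)$ (via the $\GL(g,\mathbb Z)$-action), and for all nonzero coefficients you defer to ``the theory of singular forms'' and a functional equation under $Z\mapsto -Z^{-1}$, explicitly flagging this as the main obstacle. That deferred step \emph{is} the theorem: the vanishing of holomorphic $\mathrm{Sym}^2$-valued (highest weight $(2,0,\dots,0)$) Siegel modular forms for $g\geq 2$ is a nontrivial result, which the paper obtains by citing Weissauer \cite{Weissauer1}, \cite{Weissauer2}: $(\Omega^v_{\mathbb H_g})^{\Gamma_g}=0$ whenever $v\neq ug-\tfrac12u(u-1)$, applied with $v=1$ (the case $g=1$ being $M_2(\SL(2,\mathbb Z))=0$, Lemma \ref{lemma:uniqe1}). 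No elementary Fourier-expansion or functional-equation manipulation of the kind you sketch is known to yield it, so as written the ``at most one holomorphic $G$'' claim rests on an unproved (and uncited) assertion; supplying Weissauer's theorem, or an equivalent reference on vector-valued forms of small weight, is what is needed to close your proof.
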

In the classical case of $g=1$, the function $\sqrt{-1}G_2(z)$ is the unique holomorphic function on
the upper half plane satisfying the condition, which gives Serre's derivative. But when $g\ge 2$ we are not able
to construct such a matrix function $G$.

H. Maass has constructed a non-holomorphic derivative operator of Siegel modular forms by invariant
differential operators. For Siegel modular forms $f$ of weight $k$, Maass (\cite{Maass2}, P317) defines the operator
\[
D_k f(Z)=\det(Y)^{\kappa -k-1}\det\left(\frac{\partial}{\partial Z}\right)[\det(Y)^{k+1-\kappa} f(Z)],
\]
where $\kappa=(g+1)/2$ and the determinant of
$\frac{\partial}{\partial Z}$ is taken first, and shows that the
differential operator $D_k$ acts on the $\mathbb C^{\infty}$-Siegel
modular forms and maps $\widetilde{M}_k$ to $\widetilde{M}_{k+2}$.
We do not know the relation between our operator in Theorem \ref{thm1} and Maass'.
Compared to our operator, $D_k$ is linear with
respect to $f$. Moreover, our operator is a combination of degree 1
partial derivatives of $f$, but $D_k$ is a combination of degree $g$
partial derivatives. G. Shimura \cite{Shimura1} considers the
compositions $D_r^k=D_{r+2k-2}\cdots D_{r+2}D_r$ of Maass' operator,
which maps $\widetilde{M}_r$ to $\widetilde{M}_{r+2k}$. For our
operator one can also consider the compositions and then construct
the Rankin-Cohen brackets. We wish that Maass' operator could be got
in this way.

The paper is organized as follows. In section one, we introduce
the concept of modular connection on a Siegel upper plane, and show several lemmas
on it. In section two, we compute the expressions of the differential
forms under the modular connection, and prove the two theorems above.
Finally in section three, we show Lemma \ref{lemma:mainLemma} which
explicitly gives the connection
coefficients of the Levi-Civita connection on a Siegel upper half plane.

Our calculations in sections 2 and 3 are tested by matlab in the cases $g=2$ and $g=3$.

\section{Modular Connections}
In this section we first recall the definition of connections in differential geometry. Then we introduce
the notion of modular connection on a Siegel upper half plane, and show several lemmas about it.
\subsection{Connections in differential geometry} For the backgrounds and notations on differential
geometry, especially on connections, we refer to the books
\cite{Chern} and \cite{Jost}. Here we just recall some basic definitions
and results on connections. Suppose $E$ is a $q$-dimensional real
vector bundle on a smooth manifold $M$, and $\Gamma(E)$ is the set
of smooth sections of $E$ on $M$. Let $T^*(M)$ be the cotangent space of $M$.
A connection on the vector bundle $E$ is a map
$$
D:\quad \Gamma(E)\longrightarrow\Gamma(T^*(M)\otimes E),
$$
which satisfies the following conditions
\begin{enumerate}
  \item For any $s_1,s_2\in\Gamma(E)$,
\[
D(s_1+s_2)=D(s_1)+D(s_2).
\]
\item For any $s\in\Gamma(E)$ and any $\alpha\in \mathbb C^\infty(M)$,
\[
D(\alpha s)=d\alpha\otimes\alpha D(s).
\]
\end{enumerate}
If $M$ has a generalized Riemannian metric $G=\sum_{i,j}g_{ij}du^idu^j$, by
the fundamental theorem of Riemannian geometry, $M$ has a unique
torsion-free and metric-compatible connection, called Levi-Civita
connection of $M$. The coefficients $\Gamma_{ij}^k$ of the
Levi-Civita connection are given by
\begin{equation}\label{formula}
  \Gamma_{ij}^k = \frac 12\sum_l g^{kl}\left(\frac{\partial g_{il}}{\partial
u^j}+\frac{\partial g_{jl}}{\partial u^i}- \frac{\partial
g_{ij}}{\partial u^l}\right),
\end{equation}
where $g^{ij}$ are elements of the matrix $(g^{ij}):=(g_{ij})^{-1}$.

 The following lemma is useful in the application of connections to automorphic forms.

\begin{lem}\label{lemma:basic} Let $\Gamma$ be a group, $(M,G)$ a Riemannian manifold and $D$ the
Levi-Civita connection on $M$.
If $\Gamma$ has a smooth left action on $M$ such that $G(\sigma_\star
X,\sigma_\star Y)=G(X,Y)$ for all $\sigma\in \Gamma,X,Y\in T(M)$,
 then
 \[
 \sigma D=D\sigma   \qquad (\sigma\in \Gamma).
 \]
Moreover, if $M$ is a complex manifold such that $\Gamma$ maps
$(r,s)$ forms to $(r,s)$ forms, and put $D=D^{1,0}+D^{0,1}$, where
$D^{1,0}$ is the holomorphic part, then for $\sigma\in \Gamma$
\[
\sigma D^{1,0}=D^{1,0}\sigma\quad\text{ and }\quad  \sigma D^{0,1}=D^{0,1}\sigma.
\]
\end{lem}
\begin{proof} $D$ is the unique torsion free connection which
preserves the Riemannian metric $G$. Since $G$ is $\Gamma$-invariant,
the connection $\sigma^{-1}D\sigma$ also preserves the
Riemannian metric and is torsion free for any $\sigma\in\Gamma$,
hence $\sigma D=D\sigma$. For more detail, see (\cite{Paradan}, P35).
\end{proof}

\subsection{Seigel upper half plane} We first fix some notations.
The Siegel upper half plane of degree $g\geq 1$ is defined to be the
$g(g+1)/2$ dimensional open complex variety
\[
\mathbb H_g:=\{Z=X+\sqrt{-1}Y\in M(g,\mathbb C)\mid Z^t=Z, Y>0\}.
\]
Write $Z=(Z_{ij})$. Set $\Omega=\{(i,j)\mid 1\le i\le j\le g\}$ with the dictionary order.
If $I=(i,j)\in\Omega$, we define $Z_I:=Z_{ij}$. Fix a series of coordinates
$\{dZ_I,d\bar{Z}_I\mid I\in\Omega\}$ on
$\mathbb H_g$. The symplectic group of degree $g>0$ over $\mathbb R$ is the group
\[
\Sp(2g,\mathbb R)=\left\{M\in GL(2g,\mathbb R)
\,\middle |\,  MJM^t=J\right\},
\]
where $J=\left(\begin{array}{cc} 0 & I_{g} \\ -I_{g} & 0
\end{array}\right)$. We usually write an element of $\Sp(2g,\mathbb R)$ in the form
$\left(\begin{array}{cc} A & B \\ C & D \end{array}\right)$, where $A,B,C$ and $D$ are $g\times g$ blocks.
The symplectic group $\Sp(2g,\mathbb R)$ acts on $\mathbb H_g$ by the rule:
\[
\gamma(Z):=(AZ+B)(CZ+D)^{-1}, \quad Z\in\mathbb H_g,\quad
\gamma=\left(\begin{array}{cc} A & B \\ C & D \end{array}\right)\in \Sp(2g,\mathbb R).
\]
By Maass (\cite{Maass1}, P98), $d(\gamma
Z)=(ZC^t+D^t)^{-1}dZ(CZ+D)^{-1}:=(d\tilde Z_{ij})$. Let
\[
(d\tilde Z_{11},d\tilde Z_{12},\cdots,d\tilde
Z_{1g},\cdots,\cdots,d\tilde Z_{gg}) =
(dZ_{11},dZ_{12},\cdots,dZ_{1g},\cdots,\cdots,dZ_{gg})\cdot S(\gamma,Z)
\]
where $S:=S(\gamma,Z)$ is a $\frac{g(g+1)}{2}\times \frac{g(g+1)}{2}$ matrix of holomorphic functions on
$\Sp(2g,\mathbb{Z})\times\mathbb H_g$.

From Lemma \ref{lemma:basic}, one can see that the connection matrix $\omega$ consisting of
the connection coefficients of the
Levi-Civita connection associated to the invariant metric $ds^2=\Tr(Y^{-1}dZ\cdot Y^{-1}d\bar Z)$ given by
Siegel (\cite{Maass1}, P8) on the Siegel upper plane $\mathbb H_g$ satisfies
\[
\gamma(\omega)=-S^{-1}\cdot dS +S^{-1}\cdot\omega\cdot S
\]
for all $\gamma\in\Sp(2g,\mathbb R)$. Refer also to the proof of Lemma \ref{lemma:modularconnectionBasic1} below.
But in the studying of modular forms, we only need that the equality holds for all $\gamma\in\Sp(2g,\mathbb Z)$,
the Siegel modular group.
So we need to introduce a weaker notion to study modular forms.

Now we recall the definition of Siegel modular forms, for more details, see \cite{Andrianov} and \cite{Geer}.
\begin{definition}
A (classical) Siegel modular form of weight $k$ (and degree $g$) is a holomorphic function
$f:\mathbb H_g\rightarrow \mathbb C$ such that
\[f(\gamma(Z))=\det(CZ+D)^k f(Z)\]
for all $\gamma=\left(
                  \begin{array}{cc}
                    A & B \\
                    C & D \\
                  \end{array}
                \right)\in \Sp(2g,\mathbb Z)
$ (with the usual holomorphicity requirement at $\infty$ when $g=1$).
\end{definition}
\subsection{Modular connections} The notations are the same as those above.
\begin{definition}[Modular Connection Coefficients (MCC)]\label{defi:MCC} The modular connection coefficient on
$\mathbb H_g$
is a series of $\mathbb C^\infty$-functions $\{\Gamma_{IJ}^K\mid
I,J,K\in\Omega\}$ such that for all $\gamma\in\Sp(2g,\mathbb Z)$,
\[
\gamma(\omega)=-S^{-1}\cdot dS +S^{-1}\cdot\omega\cdot S
,\quad\text{or}\quad S\cdot \gamma(\omega)=\omega\cdot S -dS,
\]
where $\omega=(\omega_I^J)$ and
$\omega_I^J=\sum_{K\in\Omega}\Gamma_{IK}^JdZ_K$. Here $I$ and $J$
are the row and column indices respectively. When
$\{\Gamma_{IJ}^K\}$ are holomorphic, we call it holomorphic MCC (HMCC). The matrix $\omega$ is called the
modular connection matrix.
\end{definition}

In the following, $\mathbb C^\infty(\mathbb H_g)$ is the set of $\mathbb C^\infty$ functions on $\mathbb H_g$,
and $\text{Hol}(\mathbb H_g)$ is the set of
holomorphic functions on $\mathbb H_g$.
\begin{definition}[Modular Connection] Let $\{\Gamma_{IJ}^K\}$ be a MCC (resp. HMCC) on $\mathbb H_g$ and $\Omega^\infty$ be
the commutative $\mathbb C^\infty(\mathbb H_g)$-algebra (resp. $\text{Hol}(\mathbb H_g)$-algebra) generated by
$\{dZ_I\}_{I\in\Omega}$ with the relations $dZ_IdZ_J=dZ_JdZ_I$ for any $I,J\in\Omega$. The linear operator
\[
D: \Omega^\infty\longrightarrow\Omega^\infty
\]
is uniquely defined by the following two relations
\[
D(dZ_K)=-\sum_{I,J\in\Omega}\Gamma_{IJ}^K dZ_IdZ_J
\]
and
\[
D(fdZ_{K_1}dZ_{K_2}\cdots dZ_{K_r})= df\cdot dZ_{K_1}\cdots dZ_{K_r}
+ \sum_{i=1}^r fdZ_{K_1}dZ_{K_2}\cdots D(dZ_{K_i})\cdots dZ_{K_r},
\]
and we call it the modular connection associated to $\{\Gamma_{IJ}^K\}$.
\end{definition}
One can easily show that $D(dZ_K)=-\sum_{I\in\Omega}\omega_I^K\cdot
dZ_I$ and
\[
(D(dZ_{11}),D(dZ_{12}),\cdots,\cdots,\cdots,D(dZ_{gg}))=-(dZ_{11},dZ_{12},\cdots,\cdots,\cdots,dZ_{gg})\cdot\omega
\]
 When $\{\Gamma_{IJ}^K\}$ is holomorphic, we also call $D$ a
holomorphic modular connection. Compared with the definition of
connections in differential geometry, except for the weaker conditions, the modular connection
also ignore the part on $\{d\bar{Z}_I\}_{I\in\Omega}$.

\subsection{Basic lemmas on modular connections} The following two lemmas are basic to our application of
modular connections to the Siegel modular forms. For the modular connections, we have the similar result to
Lemma \ref{lemma:basic}.
\begin{lem}\label{lemma:modularconnectionBasic1} Let $D$ be a modular connection on $\mathbb H_g$. Then
$\gamma D=D\gamma$ for any $\gamma\in\Sp(2g,\mathbb Z)$. Moreover, if $f$
is a Siegel modular forms of weight $2k$,
then $D\left(f(\det(dZ))^k\right)$ is invariant under the action of $\Gamma_g=\Sp(2g,\mathbb Z)$.
\end{lem}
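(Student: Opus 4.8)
The plan is to establish the two assertions of Lemma \ref{lemma:modularconnectionBasic1} in turn, deriving the second from the first together with the defining transformation law of the modular connection coefficients. For the first assertion, $\gamma D = D\gamma$, the strategy is to verify the commutation on the generators $dZ_K$ of $\Omega^\infty$ and on functions, and then extend to arbitrary elements by the Leibniz rule built into the definition of $D$. The key computational input is the relation
\[
(D(dZ_{11}),\dots,D(dZ_{gg})) = -(dZ_{11},\dots,dZ_{gg})\cdot\omega,
\]
noted just after the definition of the modular connection, together with the substitution rule $d(\gamma Z) = (\dots)\cdot S(\gamma,Z)$, i.e. the vector $(d\tilde Z_I)$ equals $(dZ_I)\cdot S$. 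First I would apply $D$ to $\gamma(dZ_K) = d\tilde Z_K$, express the result using the product rule, and compare with $\gamma(D(dZ_K))$; the discrepancy is governed precisely by $dS$ and $S^{-1}dS$, so the defining identity $S\cdot\gamma(\omega) = \omega\cdot S - dS$ of the MCC (Definition \ref{defi:MCC}) is exactly what makes the two sides agree. Since $\gamma$ acts as an algebra map on $\Omega^\infty$ (pulling back holomorphic functions and commuting with $d$), and $D$ is determined by its action on generators via Leibniz, the commutation propagates to all of $\Omega^\infty$.

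For the second assertion I would specialize the first to the element $f(\det(dZ))^k$ where $f$ is a Siegel modular form of weight $2k$. The point is that $\det(dZ)$, viewed as an element of $\Omega^\infty$ via the coordinates $\{dZ_I\}$, transforms under $\gamma$ by the automorphy factor: since $(d\tilde Z_{ij}) = (ZC^t + D^t)^{-1}dZ(CZ+D)^{-1}$, taking determinants in the appropriate symmetric sense gives $\gamma(\det(dZ)) = \det(CZ+D)^{-2}\det(dZ)$ (the two factors of $\det(CZ+D)^{-1}$ from left and right, using $\det(ZC^t+D^t) = \det(CZ+D)$ by symmetry). Hence $\gamma\big((\det(dZ))^k\big) = \det(CZ+D)^{-2k}(\det(dZ))^k$. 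Combined with the modularity $f(\gamma Z) = \det(CZ+D)^{2k}f(Z)$, the two automorphy factors cancel, so $\gamma\big(f(\det(dZ))^k\big) = f(\det(dZ))^k$; that is, $f(\det(dZ))^k$ is $\Gamma_g$-invariant as an element of $\Omega^\infty$. Applying $\gamma D = D\gamma$ then yields
\[
\gamma\!\left(D\big(f(\det(dZ))^k\big)\right) = D\!\left(\gamma\big(f(\det(dZ))^k\big)\right) = D\big(f(\det(dZ))^k\big),
\]
which is the claimed $\Gamma_g$-invariance.

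The main obstacle I expect is bookkeeping the relationship between the matrix $S = S(\gamma,Z)$ and the block data $(CZ+D)$: one must check that the transformation of $\det(dZ)$ really produces the scalar $\det(CZ+D)^{-2k}$ and not some spurious factor, which amounts to understanding how $S$ acts on the top exterior-type power of the symmetric coordinates. Concretely, the determinant $\det(dZ)$ should be a $\Gamma_g$-semiinvariant whose automorphy factor is $\det(S)$ raised to the appropriate power, and one needs $\det(S) = \det(CZ+D)^{-(g+1)}$ (or the analogous normalization) to be reconciled with the direct computation $\gamma(\det(dZ)) = \det(CZ+D)^{-2}\det(dZ)$. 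The cleanest route is probably to avoid $S$ entirely for this step and compute $\gamma(\det(dZ))$ directly from Maass's formula $d(\gamma Z) = (ZC^t+D^t)^{-1}dZ(CZ+D)^{-1}$, treating $\det(dZ)$ as the determinant of the symmetric matrix of one-forms; the verification of $\gamma D = D\gamma$ in the first assertion is where $S$ and $dS$ genuinely enter, and there the only delicate point is confirming that $d$ commutes with the $\gamma$-pullback so that $dS$ appears with exactly the sign and placement dictated by Definition \ref{defi:MCC}.
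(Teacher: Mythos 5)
Your proposal is correct and follows essentially the same route as the paper: the paper also verifies $\gamma D = D\gamma$ on the row vector of generators $dZ_I$ by comparing $\gamma(D\alpha)=\alpha(dS-\omega S)$ (via the MCC law $\gamma(\omega)=-S^{-1}dS+S^{-1}\omega S$) with $D(\gamma\alpha)=D(\alpha S)=\alpha(dS-\omega S)$ (via Leibniz), and then deduces the second claim from the $\Gamma_g$-invariance of $f(\det(dZ))^k$. Your direct computation of $\gamma(\det(dZ))=\det(CZ+D)^{-2}\det(dZ)$ from Maass's formula simply fills in a cancellation the paper leaves implicit, so there is no substantive difference.
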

\begin{proof}
Let $\alpha=
(dZ_{11},\cdots,dZ_{1g},dZ_{22},\cdots,dZ_{2g},\cdots,dZ_{gg})$.
Then $D\alpha=-\alpha\omega$. On one side,
\[
\gamma(D\alpha)= -\gamma(\alpha)\gamma(\omega)=-\alpha S (-S^{-1}dS\cdot +S^{-1}\cdot\omega\cdot S)
 =  \alpha(d S- \omega\cdot S).
\]
On the other side,
\[
D(\gamma\alpha)=D(\alpha\cdot S)= D(\alpha)\cdot S+ \alpha\cdot
dS=-\alpha\omega\cdot S+\alpha dS=\alpha(- \omega\cdot S+dS).
\]

For $f\in M_{2k}(\Gamma_g)$, we see $f(\det(dZ))^k$ is invariant under the action of $\Gamma_g$, and so is $D(f(\det(dZ))^k)$.
\end{proof}
The following lemma directly from Lemma \ref{lemma:basic} gives a modular connection.
\begin{lem}\label{lemma:1.8} The holomorphic part $D^{1,0}$ of the Levi-Civita connection
associated to the invariant metric $ds^2=\Tr(Y^{-1}dZ\cdot Y^{-1}d\bar Z)$ on the Siegel upper plane
$\mathbb H_g$ is a modular connection.
\end{lem}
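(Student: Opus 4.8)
The plan is to obtain the defining relation of Definition \ref{defi:MCC} as a direct consequence of the equivariance furnished by Lemma \ref{lemma:basic}. First I would record the two geometric inputs needed to apply that lemma. Siegel's invariant metric $ds^2=\Tr(Y^{-1}dZ\cdot Y^{-1}d\bar Z)$ is preserved by all of $\Sp(2g,\mathbb R)$ (\cite{Maass1}), and each $\gamma\in\Sp(2g,\mathbb R)$ acts on $\mathbb H_g$ biholomorphically; since $\gamma^{*}dZ_I=\sum_M S_{MI}\,dZ_M$ with $S=S(\gamma,Z)$ holomorphic, $\gamma$ carries $(r,s)$-forms to $(r,s)$-forms. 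These are exactly the hypotheses of the complex part of Lemma \ref{lemma:basic}, which therefore yields $\gamma D^{1,0}=D^{1,0}\gamma$ for every $\gamma\in\Sp(2g,\mathbb Z)$.

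Next I would pin down the shape of $D^{1,0}$. Because $(\mathbb H_g,ds^2)$ is K\"ahler, the Levi-Civita connection preserves the type decomposition, so its holomorphic part restricts to an honest connection on the holomorphic cotangent bundle $T^{*(1,0)}\mathbb H_g$. With respect to the global coframe $\alpha=(dZ_I)_{I\in\Omega}$ it has a connection matrix $\omega$ whose entries are $(1,0)$-forms $\omega_I^J=\sum_{K}\Gamma_{IK}^J\,dZ_K$ with $\mathbb C^\infty$ coefficients; writing $D^{1,0}\alpha=-\alpha\omega$ exhibits $\{\Gamma_{IK}^J\}$ as the candidate MCC. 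It then remains only to check that this family satisfies the transformation law.

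For that I would evaluate $D^{1,0}(\gamma^{*}\alpha)$ in two ways, mirroring (in reverse) the computation in the proof of Lemma \ref{lemma:modularconnectionBasic1}. Since $\gamma^{*}\alpha=\alpha S$ and $S$ is holomorphic, the Leibniz rule gives $D^{1,0}(\alpha S)=D^{1,0}(\alpha)\,S+\alpha\,dS=\alpha(dS-\omega S)$. On the other hand, equivariance gives $D^{1,0}(\gamma^{*}\alpha)=\gamma^{*}\!\left(D^{1,0}\alpha\right)=-\gamma^{*}(\alpha)\,\gamma(\omega)=-\alpha S\,\gamma(\omega)$. Equating the two expressions and cancelling the coframe leaves $S\,\gamma(\omega)=\omega S-dS$, i.e. $\gamma(\omega)=-S^{-1}dS+S^{-1}\omega S$, which is precisely the relation in Definition \ref{defi:MCC}. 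Hence $\{\Gamma_{IK}^J\}$ is an MCC and the modular connection it determines agrees with $D^{1,0}$, proving the lemma.

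The step requiring the most care --- and the main obstacle --- is the cancellation of the coframe $\alpha$ at the end. One cannot perform it inside the symmetric algebra $\Omega^\infty$: there $\alpha M=0$ only forces the symmetrization of $M$ over its row and form indices to vanish, not $M$ itself, so a priori only a symmetrized version of the transformation law survives. The clean way around this is to keep the entire argument on the vector bundle $T^{*(1,0)}\mathbb H_g$, where $\alpha$ is a genuine frame and the connection matrix is read off uniquely from $D^{1,0}\alpha=-\alpha\omega$; cancelling the frame is then simply the uniqueness of the connection matrix relative to a frame. Only after the matrix identity is established does one pass to $\Omega^\infty$ to identify $D^{1,0}$ with the modular connection of $\{\Gamma_{IK}^J\}$. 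I would also flag, as a minor point to verify, that the type-preservation making $\omega$ of pure type $(1,0)$ indeed holds, which is where the K\"ahler property of the Siegel metric is used.
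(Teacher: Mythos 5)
Your proposal is correct and follows essentially the same route as the paper: the paper deduces Lemma \ref{lemma:1.8} directly from Lemma \ref{lemma:basic} (equivariance of $D^{1,0}$ under the type-preserving isometric action of $\Sp(2g,\mathbb Z)$), with the transformation law $\gamma(\omega)=-S^{-1}dS+S^{-1}\omega S$ obtained by precisely the two-way evaluation of $D^{1,0}(\alpha S)$ that you carry out, mirroring (in reverse) the proof of Lemma \ref{lemma:modularconnectionBasic1}. Your extra care in cancelling the coframe at the level of the bundle $T^{*(1,0)}\mathbb H_g$ rather than inside the symmetrized algebra $\Omega^\infty$, and your flagging of the type-preservation (K\"ahler) point, are legitimate refinements of steps the paper passes over silently.
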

We will give the explicit expression of $D^{1,0}(f\det(dZ)^k)$ in Proposition \ref{prop:DfdZK}.

\subsection{Modular connections in the classical case}
Let us consider the classical upper half plane $\mathbb H=\mathbb H_1$
to look for what condition of the coefficient $\Gamma:=\Gamma_{(1,1),(1,1)}^{(1,1)}$
of a modular connection should satisfy. Let $\omega=\Gamma dz$. One can easily check that for
$\gamma\in\SL(2,\mathbb Z)$
\[\gamma(\omega)=-S^{-1}dS+S^{-1}\omega S\Longleftrightarrow \frac{\gamma(\Gamma)}{(cz+d)^2}
=\Gamma+\frac{2c}{cz+d}.\]
Recall that (\cite{Koblitz}, P113)
\[
\frac 1{(cz+d)^2}\cdot\frac{\sqrt{-1}}{\text{Im}(\gamma z)}=\frac{\sqrt{-1}}{\text{Im}(z)}
+\frac{2c}{cz+d}\quad\text{and}\quad
\frac {\sqrt{-1}G_2(\gamma z)}{(cz+d)^2}=\sqrt{-1}G_2(z)+\frac{2c}{cz+d},
\]
where
\[
G_2(z)=\frac{1}{2\pi}\left(\sum_{n\not=0}\frac 1{n^2}+\sum_{m\not=0}\sum_{n\in\mathbb Z}\frac 1{(mz+n)^2}\right).
\]
So $\frac{\sqrt{-1}}y$ and $\sqrt{-1}G_2(z)$ give us two modular
connections on $\mathbb H$, which we denote by
$D_1$ and $D_2$ respectively. The later is holomorphic. We have
\[
D_1(fdz)=\left(\frac{df}{dz}-\frac{\sqrt{-1}}y
f\right)dzdz\quad\text{and}\quad
D_2(fdz)=\left(\frac{df}{dz}-\sqrt{-1}G_2(z)f\right)dzdz.
\]
By the expressions of $D_1(f(dz)^k)$ and $D_2(f(dz)^k)$ and by Lemma \ref{lemma:modularconnectionBasic1}, we have
\begin{cor} Let $f$ be a modular form of weight $2k$. Then $\frac{df}{dz}-\frac{\sqrt{-1}k}y f$
and $\frac{df}{dz}-\sqrt{-1}kG_2(z)f$ are modular forms of weight $2k+2$. They are
non-holomorphic and holomorphic, respectively.
\end{cor}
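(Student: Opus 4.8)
The plan is to read the corollary directly off the two formulas for $D_1(f\,dz)$ and $D_2(f\,dz)$ together with the invariance supplied by Lemma \ref{lemma:modularconnectionBasic1}, specialized to $g=1$ where $\det(dZ)=dz$ and $\det(CZ+D)=cz+d$. First I would compute $D_i(f(dz)^k)$ for $i=1,2$ using the Leibniz rule built into the definition of a modular connection. Since $dz$ commutes with itself in $\Omega^\infty$, all $k$ terms produced by differentiating the factors of $(dz)^k$ coincide, so writing $\Gamma$ for the connection coefficient ($\Gamma=\sqrt{-1}/y$ for $D_1$ and $\Gamma=\sqrt{-1}G_2(z)$ for $D_2$) and using $D_i(dz)=-\Gamma(dz)^2$ one obtains
\[
D_i(f(dz)^k)= \frac{df}{dz}(dz)^{k+1} + kf\,D_i(dz)(dz)^{k-1}=\left(\frac{df}{dz}-k\Gamma f\right)(dz)^{k+1}.
\]
This exhibits the two candidate operators precisely as the scalar coefficients of $(dz)^{k+1}$.

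Next I would invoke Lemma \ref{lemma:modularconnectionBasic1}: because $f\in M_{2k}$, the form $D_i(f(dz)^k)$ is invariant under every $\gamma\in\SL(2,\mathbb Z)$ with lower row $(c,d)$. The remaining task is to convert this invariance of a differential object into the modular transformation law for its scalar coefficient. Writing $h:=\frac{df}{dz}-k\Gamma f$ and recalling $d(\gamma z)=(cz+d)^{-2}\,dz$, hence $(d(\gamma z))^{k+1}=(cz+d)^{-2(k+1)}(dz)^{k+1}$, the invariance $h(\gamma z)(d(\gamma z))^{k+1}=h(z)(dz)^{k+1}$ forces
\[
h(\gamma z)=(cz+d)^{2k+2}\,h(z).
\]
This is exactly the weight-$(2k+2)$ automorphy condition, so each $h$ is a (possibly non-holomorphic) modular form of weight $2k+2$; the two transformation identities quoted from (\cite{Koblitz}, P113) are what guarantee that $\Gamma=\sqrt{-1}/y$ and $\Gamma=\sqrt{-1}G_2(z)$ are legitimate modular connection coefficients feeding into this step.

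Finally I would settle the holomorphicity dichotomy, which is immediate from the explicit formulas: $\frac{df}{dz}$ is holomorphic for holomorphic $f$, while $\sqrt{-1}/y$ is not, so $\frac{df}{dz}-\frac{\sqrt{-1}k}{y}f$ is non-holomorphic, whereas $G_2(z)$ is holomorphic, so $\frac{df}{dz}-\sqrt{-1}kG_2(z)f\in\text{Hol}(\mathbb H)$ is a genuine holomorphic modular form. The only genuinely delicate point is the bookkeeping in the second step—extracting the automorphy factor of $h$ from the invariance of $D_i(f(dz)^k)$ by dividing out the known transformation of $(dz)^{k+1}$—but this is routine once the product-rule computation is in hand; everything else reduces to the Leibniz rule and the two quoted transformation identities.
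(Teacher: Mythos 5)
Your proposal is correct and follows exactly the paper's route: the paper also obtains the corollary by combining the Leibniz-rule expressions $D_i(f(dz)^k)=\left(\frac{df}{dz}-k\Gamma f\right)(dz)^{k+1}$ (with $\Gamma=\sqrt{-1}/y$ or $\sqrt{-1}G_2(z)$, legitimized as modular connection coefficients by the two Koblitz identities) with the $\SL(2,\mathbb Z)$-invariance of $D_i(f(dz)^k)$ from Lemma \ref{lemma:modularconnectionBasic1}, then dividing out the transformation of $(dz)^{k+1}$. You have merely written out the bookkeeping that the paper leaves implicit, so there is nothing to change.
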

In fact, the modular connection $D_1$ comes from the Levi-Civita connection $D$ associated to the invariant metric
$ds^2=\frac{dz\,d\bar z}{y^2}$.
\begin{lem}
$D(dz)=-\frac{\sqrt{-1}}ydz\, dz \quad\text{and}\quad D(d\bar
z)=\frac{\sqrt{-1}}yd\bar z\, d\bar z$. So the coefficients of $D$ give the
modular connection $D_1$.
\end{lem}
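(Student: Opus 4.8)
The plan is to compute $D(dz)$ and $D(d\bar z)$ directly from the Levi-Civita connection coefficients associated to the invariant metric $ds^2 = \frac{dz\,d\bar z}{y^2}$, and then to observe that the coefficient appearing in $D(dz)$ agrees with the function $\Gamma = \frac{\sqrt{-1}}{y}$ that we already identified as the modular connection coefficient for $D_1$. First I would set up the metric in real coordinates. Writing $z = x + \sqrt{-1}y$, the Hermitian metric $\frac{dz\,d\bar z}{y^2}$ corresponds to the Riemannian metric $\frac{dx^2 + dy^2}{y^2}$, so in the coordinates $(u^1, u^2) = (x, y)$ the metric tensor is $g_{ij} = \frac{1}{y^2}\delta_{ij}$, i.e. $g_{11} = g_{22} = y^{-2}$ and $g_{12} = g_{21} = 0$, with inverse $g^{11} = g^{22} = y^2$.

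Next I would plug these into the formula \eqref{formula} for the Christoffel symbols $\Gamma_{ij}^k = \frac{1}{2}\sum_l g^{kl}\bigl(\partial_j g_{il} + \partial_i g_{jl} - \partial_l g_{ij}\bigr)$. Since $g_{ij}$ depends only on $y = u^2$, the only nonvanishing derivatives are $\partial_2 g_{11} = \partial_2 g_{22} = -2y^{-3}$. A short computation then gives the standard hyperbolic-plane symbols $\Gamma_{11}^2 = y^{-1}$, $\Gamma_{22}^2 = -y^{-1}$, $\Gamma_{12}^1 = \Gamma_{21}^1 = -y^{-1}$, with all others zero. From these I would assemble the connection form $Ddx = -(\Gamma_{11}^1 dx + \Gamma_{21}^1 dy)\,dx - (\Gamma_{12}^1 dx + \Gamma_{22}^1 dy)\,dy$ and similarly $Ddy$, substitute $dx = \frac{1}{2}(dz + d\bar z)$, $dy = \frac{1}{2\sqrt{-1}}(dz - d\bar z)$, and collect terms. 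The holomorphic part $D^{1,0}$ picks out the $dz\,dz$ and $d\bar z\,d\bar z$ components.

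The computation should yield exactly $D(dz) = -\frac{\sqrt{-1}}{y}\,dz\,dz$ and $D(d\bar z) = \frac{\sqrt{-1}}{y}\,d\bar z\,d\bar z$; in particular the cross term $dz\,d\bar z$ must cancel, which is the one arithmetic point to verify carefully since sign errors in the substitution $dy \mapsto \frac{1}{2\sqrt{-1}}(dz - d\bar z)$ are the easiest slip. Once $D(dz) = -\frac{\sqrt{-1}}{y}\,dz\,dz$ is established, the coefficient of $dz\,dz$ is precisely $-\Gamma$ with $\Gamma = \frac{\sqrt{-1}}{y}$, matching the relation $D_1(f\,dz) = \bigl(\frac{df}{dz} - \frac{\sqrt{-1}}{y}f\bigr)dz\,dz$ from the preceding discussion, so the coefficients of $D$ do give the modular connection $D_1$, as claimed.

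The main obstacle is not conceptual but bookkeeping: one must be consistent about whether the $\Gamma_{ij}^k$ are defined for the real metric and then translated to complex coordinates, and whether the factor conventions in $\partial_{ij}$ and in the Hermitian-versus-real normalization of the metric are tracked correctly so that the final coefficient comes out as $\frac{\sqrt{-1}}{y}$ rather than, say, $\frac{2\sqrt{-1}}{y}$ or $\frac{\sqrt{-1}}{2y}$. I would therefore double-check the normalization by comparing against the already-verified transformation law $\frac{\gamma(\Gamma)}{(cz+d)^2} = \Gamma + \frac{2c}{cz+d}$, which $\frac{\sqrt{-1}}{y}$ is known to satisfy, giving an independent confirmation that the extracted coefficient is correct.
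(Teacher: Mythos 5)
Your proposal is correct, but it runs the computation along a genuinely different route from the paper. You compute the Christoffel symbols of $\frac{dx^2+dy^2}{y^2}$ in the \emph{real} coordinates $(x,y)$ --- your symbols $\Gamma_{11}^2=y^{-1}$, $\Gamma_{22}^2=-y^{-1}$, $\Gamma_{12}^1=\Gamma_{21}^1=-y^{-1}$, rest zero, are the right ones --- and then pass to $(z,\bar z)$ via the constant-linear substitution $dx=\tfrac12(dz+d\bar z)$, $dy=\tfrac1{2\sqrt{-1}}(dz-d\bar z)$; since this change of coordinates is linear, $D(dz)=D(dx)+\sqrt{-1}\,D(dy)$ and direct substitution is legitimate, and the mixed terms do cancel as you anticipate: one gets $D(dx)=\tfrac2y\,dx\,dy$ and $D(dy)=\tfrac1y(dy\,dy-dx\,dx)$, whence $D(dz)=-\tfrac{\sqrt{-1}}y\,dz\,dz$ and $D(d\bar z)=\tfrac{\sqrt{-1}}y\,d\bar z\,d\bar z$. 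The paper instead applies the Christoffel formula (\ref{formula}) \emph{directly in the complex coordinates} $\{z,\bar z\}$, treating $\frac{dz\,d\bar z}{y^2}$ as a generalized metric with $g_{11}=g_{22}=0$ and $g_{12}=g_{21}=\tfrac1{2y^2}$; the off-diagonal form of $(g_{ij})$ then kills the cross symbols immediately and yields $\Gamma_{11}^1=\tfrac{\sqrt{-1}}y$, $\Gamma_{22}^2=-\tfrac{\sqrt{-1}}y$ with no change of variables and no cancellation to check. What each approach buys: yours stays inside the most familiar real Riemannian geometry (the standard hyperbolic-plane symbols) at the cost of the extra bookkeeping you flag; the paper's is two lines long, and, more importantly, it is the same pattern used for general $g$ in section 3, where the metric $\Tr(Y^{-1}dZ\,Y^{-1}d\bar Z)$ is handled in the complex coordinates $\{Z_{ij},\bar Z_{ij}\}$ and a detour through real coordinates would be unmanageable. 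One small imprecision in your write-up: you say the holomorphic part $D^{1,0}$ ``picks out'' the $dz\,dz$ and $d\bar z\,d\bar z$ components, but no projection is actually needed --- the cancellation shows that the full Levi-Civita connection already satisfies $D(dz)=-\tfrac{\sqrt{-1}}y\,dz\,dz$ exactly (as expected, since the metric is K\"ahler), which is precisely what the lemma asserts.
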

\begin{proof}
Since $ds^2=\frac{dx^2+dy^2}{y^2}=\frac{dz\,d\bar z}{y^2}$, using
the coordinates ${dz,d\overline{z}}$ and the equality (\ref{formula}),
we have $\Gamma_{1,1}^{1}=\frac{\sqrt{-1}}{y},\
\Gamma_{2,1}^{1}=\Gamma_{1,2}^{1}=0, \
\Gamma_{2,1}^{2}=\Gamma_{1,2}^{2}=0$ and
$\Gamma_{2,2}^{2}=-\frac{\sqrt{-1}}{y}$.
\end{proof}
The connection $D_2$ is not from differential geometry. It is unique.
\begin{lem}[Uniqueness Lemma]\label{lemma:uniqe1} $\sqrt{-1}G_2(z)$ is the unique holomorphic function
$\Gamma$ satisfying
\[
\frac{\gamma(\Gamma)}{(cz+d)^2}=\Gamma+\frac{2c}{cz+d}~~\text{for all}~~\gamma=\left(
                                                                  \begin{array}{cc}
                                                                    a & b \\
                                                                    c & d \\
                                                                  \end{array}
                                                                \right)\in\SL(2,\mathbb Z),
\]
and so $D_2$ is the unique holomorphic modular connection on $\mathbb H$.
\end{lem}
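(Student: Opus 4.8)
The plan is to reduce the uniqueness assertion to the vanishing of the space of holomorphic weight-$2$ modular forms for $\SL(2,\mathbb Z)$. Existence requires nothing new: the second identity quoted from \cite{Koblitz} says exactly that $\Gamma=\sqrt{-1}G_2(z)$ satisfies the stated equation. For uniqueness, suppose $\Gamma_1,\Gamma_2$ are two holomorphic functions on $\mathbb H$ each satisfying
\[
\frac{\gamma(\Gamma_i)}{(cz+d)^2}=\Gamma_i+\frac{2c}{cz+d}\qquad(i=1,2)
\]
for all $\gamma=\left(\begin{smallmatrix}a&b\\c&d\end{smallmatrix}\right)\in\SL(2,\mathbb Z)$. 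Subtracting the two equations, the inhomogeneous term $2c/(cz+d)$ cancels, so the difference $h:=\Gamma_1-\Gamma_2$ satisfies the homogeneous relation $h(\gamma z)=(cz+d)^2h(z)$. Thus $h$ transforms exactly like a modular form of weight $2$, and the goal becomes showing $h\equiv 0$.

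Equivalently, the holomorphic $1$-form $h(z)\,dz$ is $\SL(2,\mathbb Z)$-invariant, since $h(\gamma z)\,d(\gamma z)=(cz+d)^2h(z)\cdot(cz+d)^{-2}\,dz=h(z)\,dz$, so it descends to the modular curve $\SL(2,\mathbb Z)\backslash\mathbb H$. The decisive input is then the classical fact $M_2(\SL(2,\mathbb Z))=0$, which I would obtain directly from the valence ($k/12$) formula: a nonzero holomorphic weight-$2$ form $h$ would have to satisfy
\[
\mathrm{ord}_\infty(h)+\tfrac12\mathrm{ord}_{\sqrt{-1}}(h)+\tfrac13\mathrm{ord}_{\rho}(h)+\sum_{P}\mathrm{ord}_P(h)=\frac{2}{12},
\]
with $\rho=e^{2\pi\sqrt{-1}/3}$ and every order nonnegative. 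Clearing denominators by $6$ turns the left side into $6\,\mathrm{ord}_\infty+3\,\mathrm{ord}_{\sqrt{-1}}+2\,\mathrm{ord}_\rho+6\sum_P\mathrm{ord}_P$, a nonnegative integer combination of $6,3,2,6$, which can never equal $1$. Hence $h\equiv 0$, forcing $\Gamma_1=\Gamma_2$, so $\sqrt{-1}G_2$ is the unique solution.

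The one delicate point — and the main obstacle — is the behaviour at the cusp that the valence formula silently uses: it requires $\mathrm{ord}_\infty(h)\ge 0$, i.e. that $h$ be holomorphic (bounded) as $\mathrm{Im}(z)\to\infty$, not merely holomorphic on $\mathbb H$. This cannot be omitted, since $E_4^2E_6/\Delta$ is a nonzero holomorphic function on $\mathbb H$ transforming with weight $2$ but having a pole at the cusp, so without a cusp condition the uniqueness would fail. I would therefore build the cusp control into the argument: taking $\gamma=\left(\begin{smallmatrix}1&1\\0&1\end{smallmatrix}\right)$ gives $h(z+1)=h(z)$, whence $h$ has a Fourier expansion in $q=e^{2\pi\sqrt{-1}z}$; since the admissible $\Gamma_1,\Gamma_2$ (like $G_2$ itself) are holomorphic at $\infty$, their difference has no principal part, giving $\mathrm{ord}_\infty(h)\ge 0$ and legitimizing the valence computation. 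Finally, a modular connection on $\mathbb H=\mathbb H_1$ is determined by its single coefficient $\Gamma=\Gamma_{(1,1),(1,1)}^{(1,1)}$ via $\omega=\Gamma\,dz$, and it is holomorphic precisely when $\Gamma$ is; thus the uniqueness of $\Gamma$ immediately yields that $D_2$ is the unique holomorphic modular connection on $\mathbb H$.
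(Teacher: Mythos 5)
Your proof follows the same core reduction as the paper's: subtract two solutions (the paper subtracts $\Gamma$ from $\sqrt{-1}G_2$), observe that the inhomogeneous term $2c/(cz+d)$ cancels so the difference transforms like a weight-$2$ form, and conclude from $M_2(\SL(2,\mathbb Z))=0$. The differences lie in what you do with that reduction. The paper simply cites Koblitz (p.~117) for the vanishing of $M_2$ and is done in three lines; you prove the vanishing yourself via the valence formula, which makes the argument self-contained. More significantly, you flag the cusp condition that the paper passes over in silence: the paper's phrase ``is a modular form of weight $2$'' tacitly assumes the difference is holomorphic at $\infty$, yet the lemma's hypothesis only gives holomorphy on $\mathbb H$. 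Your counterexample $E_4^2E_6/\Delta$ shows this is not a pedantic point --- the space of weight-$2$ forms holomorphic on $\mathbb H$ alone (weakly holomorphic forms) is infinite-dimensional, so uniqueness genuinely fails without some growth condition at the cusp. Your fix (requiring admissible $\Gamma$ to be holomorphic at $\infty$, as $\sqrt{-1}G_2$ is) amounts to adding a hypothesis to the lemma; with that reading both your proof and the paper's are correct, and without it neither is. So your write-up is not a different route but a more careful version of the same one, and it exposes a real imprecision in the statement as printed --- one that also matters for the $g=1$ case of Theorem \ref{mainthm1}, which falls back on this lemma.
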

\begin{proof}
We have
$\gamma(\sqrt{-1}G_2(z)-\Gamma)=(cz+d)^2(\sqrt{-1}G_2(z)-\Gamma)$
for all $\gamma\in\SL(2,\mathbb Z)$. So
$\sqrt{-1}G_2(z)-\Gamma$ is a modular form of weight 2 and
hence must be zero (\cite{Koblitz}, P117).
\end{proof}
We will generalize these results to $\mathbb H_g$.

\section{Derivative Operators of Siegel Modular Forms}
In this section, we first state the result determining the coefficients of
the invariant Levi-Civita connection on a Siegel upper half plane,
whose proof we put in the last section, then we compute the
expressions of the differential forms under this connection. Finally
we get a non-holomorphic derivative operator and a possible holomorphic
derivative operator.

\subsection{Coefficients of Levi-Civita connection}
The notations are the same as those in section 1. For
$I=(i,j)\in\Omega$, put
\[
 N(I)=\frac{(i-1)(2g-i)}2+j,
\]
which gives a one to one and order keeping correspondence between $\Omega$ and
$\{1,2,\cdots, \frac{g(g+1)}2\}$.
Write $u^{N(I)}=Z_{ij}$ and $u^{N(g,g)+N(i,j)}=\overline{Z}_{ij}$.
For $Z=X+\sqrt{-1}Y\in\mathbb H_g$, let $R=(R_{ij})=Y^{-1}$. For $1\le s\le g$, we set
\[
\Omega_s=\{(1,s),(2,s),\cdots,(s,s),(s,s+1),\cdots,(s,g)\}\subset\Omega.
\]
Let $K=(r,s)\in\Omega$. Assume that the elements of $Z$ in the column including $Z_K=Z_{rs}$ are
\[
u^{a_1}=Z_{1s},u^{a_2}=Z_{2s},\cdots,u^{a_s}=Z_{ss}, u^{a_{s+1}}=Z_{s+1,s}=Z_{s,s+1},\cdots,u^{a_g}=Z_{gs}=Z_{sg},
\]
and the elements in the row including $Z_{rs}$ are
\[
u^{b_1}=Z_{r1}=Z_{1r},u^{b_2}=Z_{r2}=Z_{2r},\cdots,u^{b_r}=Z_{rr}, u^{b_{r+1}}=Z_{r,r+1},\cdots,u^{b_g}=Z_{rg}.
\]
For $I\times J\in\Omega_s\times\Omega_r$, assume $Z_I=u^{a_i}$ and $Z_J=u^{b_j}$. Similarly do it for
$J\times I\in\Omega_s\times\Omega_r$. We define
\begin{equation}\label{coeff}
\Gamma_{IJ}^{K}\,(=\Gamma_{JI}^{K}):=\left\{\begin{array}{ll}
 \frac{\sqrt{-1}R_{ij}}{2^{(1-\delta(r,s))(1-\delta(a_i,b_j))}}&\mbox{ if }I\times J\text{ or }J\times I\in \Omega_s\times\Omega_r\\
0 &\text{ if }I\times J\text{ and }J\times I \not\in \Omega_s\times\Omega_r,
\end{array}\right.
\end{equation}
where $\delta(r,s)$ denotes the Kronecker delta symbol. For example, $\Gamma_{(1,i)(1,j)}^{(1,1)}=\sqrt{-1}R_{ij}$
and $\Gamma_{IJ}^{(1,1)}=0$ if $I$ or $J\not\in\Omega_1$.

Notice that if we use the coordinates $\{u^{a_i},u^{b_j}\}$, then the coefficients of the Levi-Civita connection satisfy
\[
{}_Z\!\Gamma_{IJ}^K={}_u\!\Gamma_{N(I)N(J)}^{N(K)}.
\]
In this paper we will use these two kinds of coordinates alternately.
The proof of the following lemma is long and complicated. For the
convenience of the reader, we put it in the last section.
\begin{lem}\label{lemma:mainLemma}
The coefficients $\{\Gamma_{IJ}^K\}$ defined in the equality (2.1) give the Levi-Civita connection
on $\mathbb H_g$ associated to the invariant metric
$ds^2=\Tr(Y^{-1}dZ\,Y^{-1}d\bar Z)$, and hence give a modular
connection, which we denote by $D$.
\end{lem}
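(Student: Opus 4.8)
The plan is to verify directly that the functions $\Gamma_{IJ}^K$ of \eqref{coeff} are the holomorphic Christoffel symbols of the Levi-Civita connection, by feeding the explicit invariant metric into the classical formula \eqref{formula}; the modular-connection assertion will then come for free. First I would record the metric in the complex coordinates $\{Z_I,\bar Z_I\mid I\in\Omega\}$. Writing $R=Y^{-1}$ and expanding
\[
ds^2=\Tr(R\,dZ\,R\,d\bar Z)=\sum_{a,b,c,d}R_{ab}R_{cd}\,dZ_{bc}\,d\bar Z_{da},
\]
then collecting symmetric index pairs (this is exactly where the normalizing weights $2^{1-\delta}$ enter), one obtains metric coefficients $g_{I\bar M}$ that are homogeneous quadratic in the entries of $R$; for instance, for off-diagonal $I=(i,j)$ and $M=(m,n)$ one finds $g_{I\bar M}=2(R_{im}R_{jn}+R_{in}R_{jm})$, with the diagonal-corrected analogues otherwise. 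The structural point is that $ds^2$ contains only mixed $dZ\,d\bar Z$ terms, so in these coordinates the metric matrix is block anti-diagonal: $g_{IJ}=g_{\bar I\bar J}=0$ while only $g_{I\bar M}$ and its conjugate survive. Hence its inverse is block anti-diagonal too, so in \eqref{formula} the sum for an upper holomorphic index $K$ picks up only antiholomorphic $l=\bar M$. Dropping the terms with $g_{IJ}=0$, and using that the invariant metric on the Hermitian symmetric space $\mathbb H_g$ is K\"ahler (so $\partial g_{I\bar M}/\partial Z_J=\partial g_{J\bar M}/\partial Z_I$ and the mixed-index symbols vanish), collapses \eqref{formula} to
\[
\Gamma_{IJ}^K=\sum_{M\in\Omega}g^{K\bar M}\,\frac{\partial g_{J\bar M}}{\partial Z_I}.
\]

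Next I would invert the metric. Regarding $\langle A,B\rangle=\Tr(R\,A\,R\,\bar B)$ as a Hermitian form on symmetric $g\times g$ matrices, the associated self-adjoint operator $A\mapsto RAR$ is inverted by $B\mapsto YBY$, since $RY=I_g$; thus $(g^{K\bar M})$ is read off from this inverse and is homogeneous quadratic in the entries of $Y$, carrying the reciprocal $\delta$-weights to those in $g_{I\bar M}$. Then I would differentiate: from $Y=\frac{1}{2\sqrt{-1}}(Z-\bar Z)$ the derivative $\partial Y/\partial Z_I$ is a constant (a multiple of an elementary symmetric matrix), and $\partial R/\partial Z_I=-R\,(\partial Y/\partial Z_I)\,R$. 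Substituting these into $\partial g_{J\bar M}/\partial Z_I$ and contracting against $g^{K\bar M}$, every factor of $Y$ is absorbed by a factor of $R$ through $RY=I_g$, leaving an expression linear in the entries of $R$; collecting the numerical constants then reproduces the right-hand side of \eqref{coeff}, provided the index blocks and the Kronecker factors are matched correctly.

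The main obstacle is purely the combinatorial bookkeeping. Because the coordinates run over the symmetric index set $\Omega$ of pairs $i\le j$, every passage between full matrix entries $Z_{ab}$ and coordinate forms $dZ_I$ introduces the $2^{1-\delta}$ weights, and these must be propagated consistently through the metric, its inverse, the $Z$-derivatives, and the final contraction; the case analysis in \eqref{coeff} (whether $I\times J$ or $J\times I$ lies in $\Omega_s\times\Omega_r$, and the exponent $(1-\delta(r,s))(1-\delta(a_i,b_j))$) is precisely the shadow of this weighting, and keeping it straight is the crux. A useful consistency check, and a possible shortcut, is $\Sp(2g,\mathbb R)$-equivariance: by Lemma \ref{lemma:basic} the true Levi-Civita symbols satisfy a fixed inhomogeneous transformation law, so it would suffice to match \eqref{coeff} with it at the base point $Z=\sqrt{-1}\,I_g$ (where $R=Y=I_g$) and to check that \eqref{coeff} transforms the same way. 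Finally, once \eqref{coeff} is identified with the Levi-Civita symbols, its restriction to holomorphic indices is exactly the coefficient array of the holomorphic part $D^{1,0}$, so the claim that $\{\Gamma_{IJ}^K\}$ gives a modular connection is immediate from Lemma \ref{lemma:1.8}.
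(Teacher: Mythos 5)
Your proposal is correct and is essentially the paper's own proof: the paper likewise writes the invariant metric in the coordinates $\{Z_I,\bar Z_I\}$ as a block anti-diagonal matrix, inverts the block explicitly (your operator inversion $B\mapsto YBY$ is exactly its lemma giving $M_{IJ}=Y_{ir}Y_{js}+Y_{jr}Y_{is}$), substitutes into \eqref{formula}, and grinds through the Kronecker-delta bookkeeping to match \eqref{coeff}, with the modular-connection claim then coming from Lemma \ref{lemma:1.8} just as you say. The only tactical difference is that the paper needs neither your appeal to K\"ahlerity nor the step $\partial R=-R(\partial Y)R$: it keeps the symmetrized Christoffel expression $\Gamma_{IJ}^K=\tfrac12\sum_{L}M_{KL}\left(\partial W_{IL}/\partial Z_J+\partial W_{JL}/\partial Z_I\right)$ and moves the derivatives off the $R$-quadratic matrix $W$ onto the $Y$-quadratic matrix $M$ by differentiating the identity $\sum_{L}M_{KL}W_{IL}=\delta(K,I)$, so that the only object ever differentiated is $Y$ itself, whose $Z$-derivatives are constants.
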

\subsection{Expression of differential forms under $D$}
We first compute $D(dZ_K)$.
\begin{lem}\label{Lemma3.2} Let $K=(r,s)\in\Omega$. We have
\[
D(dZ_K)=-\sqrt{-1}(dZ_{s1},dZ_{s2},\cdots,dZ_{sg})Y^{-1}\cdot(dZ_{r1},dZ_{r2},\cdots,dZ_{rg})^t.
\]
\end{lem}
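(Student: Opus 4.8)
The plan is to compute $D(dZ_K)$ directly from its defining relation $D(dZ_K)=-\sum_{I,J\in\Omega}\Gamma_{IJ}^K\,dZ_IdZ_J$ by substituting the explicit coefficients of \eqref{coeff}, and then to recognize the resulting sum as the claimed matrix product. Writing $R=(R_{ij})=Y^{-1}$, the right-hand side of the lemma unwinds to $-\sqrt{-1}\sum_{i,j=1}^gR_{ij}\,dZ_{si}dZ_{rj}$, so it suffices to show that the double sum over $\Omega\times\Omega$ collapses to this expression, i.e.\ that the coefficient of $dZ_{si}dZ_{rj}$ is uniformly $-\sqrt{-1}R_{ij}$.

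First I would isolate the contributing pairs. By \eqref{coeff}, $\Gamma_{IJ}^K\neq 0$ only when $(I,J)\in\Omega_s\times\Omega_r$ or $(I,J)\in\Omega_r\times\Omega_s$. Using the natural bijection $\Omega_s\leftrightarrow\{1,\dots,g\}$ given by $Z_I=Z_{is}$ and $\Omega_r\leftrightarrow\{1,\dots,g\}$ given by $Z_J=Z_{rj}$, together with the symmetry $dZ_{is}=dZ_{si}$ inside $\Omega^\infty$, each contributing monomial becomes a multiple of $dZ_{si}dZ_{rj}$ for a uniquely determined pair $(i,j)$. The point of the proof is then a bookkeeping of the normalization factor $2^{(1-\delta(r,s))(1-\delta(a_i,b_j))}$ against the twofold over-counting produced by the two index sets.

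For $r\neq s$ and a generic pair with $u^{a_i}\neq u^{b_j}$, the product $dZ_{si}dZ_{rj}$ is hit by exactly two distinct ordered pairs, namely $(I_0,J_0)\in\Omega_s\times\Omega_r$ and its transpose $(J_0,I_0)\in\Omega_r\times\Omega_s$; by $\Gamma_{IJ}^K=\Gamma_{JI}^K$ and commutativity $dZ_IdZ_J=dZ_JdZ_I$ each carries the weight $\tfrac{\sqrt{-1}R_{ij}}{2}$ (here $\delta(a_i,b_j)=0$), and the two halves add to $\sqrt{-1}R_{ij}$. The sole coincidence $u^{a_i}=u^{b_j}$ occurs precisely at $I=J=K$ (the single element of $\Omega_s\cap\Omega_r=\{K\}$, corresponding to $(i,j)=(r,s)$), where $\delta(a_i,b_j)=1$ makes the factor $2^0=1$ and the term is produced by one ordered pair, again giving total coefficient $\sqrt{-1}R_{rs}$. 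The diagonal case $r=s$ I would treat separately: there $\delta(r,s)=1$ forces the factor to be $1$, $\Omega_s=\Omega_r$, and the sum runs over $I,J\in\Omega_s$, yielding $-\sqrt{-1}\sum_{i,j}R_{ij}dZ_{si}dZ_{sj}$ at once.

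The main obstacle I expect is exactly this combinatorial matching: one must confirm that the self-paired term and the overlap $\Omega_s\cap\Omega_r$ are counted once and with the correct weight, so that the factors of $\tfrac12$ and the twofold over-counting cancel precisely in every case. Once this is verified, reassembling $-\sqrt{-1}\sum_{i,j=1}^gR_{ij}\,dZ_{si}dZ_{rj}$ into the matrix form $-\sqrt{-1}(dZ_{s1},\dots,dZ_{sg})Y^{-1}(dZ_{r1},\dots,dZ_{rg})^t$ is immediate and finishes the proof. As a final sanity check I would compare against the already recorded value $\Gamma_{(1,i)(1,j)}^{(1,1)}=\sqrt{-1}R_{ij}$, for which the claim reduces to $D(dZ_{11})=-\sqrt{-1}\sum_{i,j}R_{ij}dZ_{1i}dZ_{1j}$.
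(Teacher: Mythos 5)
Your proposal is correct and follows essentially the same route as the paper: both proofs substitute the explicit coefficients of \eqref{coeff} (Lemma \ref{lemma:mainLemma}) into $D(dZ_K)=-\sum_{I,J}\Gamma_{IJ}^K dZ_IdZ_J$, split into the cases $r=s$ and $r\neq s$, and verify that the factor $\tfrac12$ cancels against the double-counting from $\Omega_s\times\Omega_r$ versus $\Omega_r\times\Omega_s$, with the overlap $\Omega_r\cap\Omega_s=\{K\}$ treated separately. The only cosmetic difference is that the paper organizes this bookkeeping as an inclusion--exclusion over the index sets $A$ and $B$, while you organize it by matching the coefficient of each monomial $dZ_{si}dZ_{rj}$; the content is identical.
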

\begin{proof} The notations are as above.
If $r=s$, then $\Omega_r=\Omega_s$. By Lemma \ref{lemma:mainLemma}, we have
\[
\Gamma_{IJ}^{K}=\Gamma_{JI}^{K}=\left\{\begin{array}{ll}
 \sqrt{-1}R_{ij}&\mbox{ if }I\text{ and }J\in \Omega_r\\
0 &\text{ if }I\text{ or }J\not\in \Omega_r,
\end{array}\right.
\]
Hence,
\begin{eqnarray}
% \nonumber to remove numbering (before each equation)
\nonumber D(dZ_K)&=& -\sum_{I,J\in\Omega}\Gamma_{I,J}^K dZ_I dZ_J =-\sum_{I,J\in\Omega_r}
\Gamma_{I,J}^K dZ_I dZ_J=-\sum_{i,j=1}^g\sqrt{-1}R_{ij}dZ_{si}dZ_{rj} \\
\nonumber &=&-\sqrt{-1}(dZ_{s1},dZ_{s2},\cdots,dZ_{sg})Y^{-1}\cdot(dZ_{r1},dZ_{r2},\cdots,dZ_{rg})^t.
\end{eqnarray}

If $r\not=s$, we assume $r< s$. Then $\Omega_r\bigcap \Omega_s=\{(r,s)\}$ and
$(\Omega_r\times \Omega_s)\bigcap (\Omega_s\times \Omega_r)=\{(r,s)\times (r,s)\}$. Put
$A=(\Omega_r\times \Omega_s)\bigcup (\Omega_s\times \Omega_r)$ and $B=(\Omega_r\times \Omega_s)\bigcap (\Omega_s\times \Omega_r)$.
We have again by Lemma \ref{lemma:mainLemma},
\[
\Gamma_{IJ}^{K}=\Gamma_{JI}^{K}=\left\{\begin{array}{lll}
 \frac{\sqrt{-1}R_{ij}}{2^{1-\delta(a_i,b_j)}}=\frac{\sqrt{-1}R_{ij}}{2}&\mbox{ if }I\times J\in A\setminus B\\\
\frac{\sqrt{-1}R_{ij}}{2^{1-\delta(a_i,b_j)}}=\sqrt{-1}R_{ij}&\mbox{ if }I\times J\in B .\\\
0 &\text{ if }I\times J\not\in A
\end{array}\right.
\]
Hence,
\begin{eqnarray}
% \nonumber to remove numbering (before each equation)
\nonumber& &D(dZ_K)= -\sum_{I,J\in\Omega}\Gamma_{I,J}^K dZ_I dZ_J =-\sum_{I\times J\in A}\Gamma_{I,J}^K dZ_I dZ_J \\
\nonumber &=&-2\sum_{I\in \Omega_r \atop J\in \Omega_s}\Gamma_{I,J}^K dZ_I dZ_J+\sum_{I\in
\Omega_r\bigcap\Omega_s\atop J\in \Omega_r\bigcap\Omega_s}\Gamma_{I,J}^K dZ_I dZ_J
=-2\sum_{I\times J\in \Omega_r \times \Omega_s-B}\Gamma_{I,J}^K dZ_I dZ_J-\sum_{I\times J\in B}\Gamma_{I,J}^K dZ_I dZ_J\\
\nonumber &=&-\sum_{i,j=1}^g\sqrt{-1}R_{ij}dZ_{si}dZ_{rj}
=-\sqrt{-1}(dZ_{s1},dZ_{s2},\cdots,dZ_{sg})Y^{-1}\cdot(dZ_{r1},dZ_{r2},\cdots,dZ_{rg})^t.
\end{eqnarray}

The case $s<r$ is similar.
\end{proof}
\begin{prop}\label{prop:Ddz}
$D(\det(dZ))=-\sqrt{-1}\Tr(Y^{-1}dZ)\det(dZ)$.
\end{prop}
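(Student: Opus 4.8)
The plan is to exploit the fact that, because the one-forms $dZ_I$ commute, the algebra $\Omega^\infty$ is an ordinary commutative polynomial algebra and $D$ acts on it as a derivation; hence $D(\det(dZ))$ can be computed by a Leibniz/cofactor expansion, after which Lemma~\ref{Lemma3.2} supplies the second-order terms and a Laplace expansion collapses the result. Throughout I write $R=Y^{-1}$ and let $C_{kl}$ denote the $(k,l)$ cofactor of the symmetric matrix $dZ=(dZ_{ij})$.

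First I would expand $\det(dZ)=\sum_\sigma \operatorname{sgn}(\sigma)\prod_i dZ_{i\sigma(i)}$ and apply $D$ as a derivation term by term. Grouping the resulting terms according to which factor is differentiated, and using that $\sum_{\sigma:\,\sigma(k)=l}\operatorname{sgn}(\sigma)\prod_{i\ne k}dZ_{i\sigma(i)}$ is exactly the cofactor $C_{kl}$, I would reduce everything to the clean identity $D(\det(dZ))=\sum_{k,l}C_{kl}\,D(dZ_{kl})$, where the sum runs over all ordered pairs $(k,l)$. Then I would substitute the formula of Lemma~\ref{Lemma3.2}, namely $D(dZ_{kl})=-\sqrt{-1}\sum_{p,q}R_{pq}\,dZ_{lp}dZ_{kq}$, obtaining the quadruple sum $-\sqrt{-1}\sum_{k,l,p,q}C_{kl}R_{pq}\,dZ_{lp}dZ_{kq}$. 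Performing the sum over $l$ first and invoking the Laplace expansion identity $\sum_l C_{kl}\,dZ_{lp}=\delta_{kp}\det(dZ)$ (valid since $dZ_{lp}=dZ_{pl}$) collapses the expression to $-\sqrt{-1}\det(dZ)\sum_{k,q}R_{kq}dZ_{kq}$, and recognizing $\sum_{k,q}R_{kq}dZ_{kq}=\Tr(Y^{-1}dZ)$ yields the claim $D(\det(dZ))=-\sqrt{-1}\Tr(Y^{-1}dZ)\det(dZ)$.

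The main obstacle is the bookkeeping around the symmetric structure of $dZ$: the off-diagonal generators $dZ_{ij}$ and $dZ_{ji}$ are literally the same element, so one must be careful whether to sum over ordered pairs or over $\Omega$. Phrasing the Leibniz step in terms of permutation positions makes the full, unordered sum $\sum_{k,l}C_{kl}\,D(dZ_{kl})$ emerge automatically with the correct multiplicities, sidestepping the factor-of-two discrepancy that would appear if one differentiated with respect to the independent variables $dZ_I$, $I\in\Omega$, directly (this is the same $2^{1-\delta(i,j)}$ normalization built into the $\partial_{ij}$). The symmetries $C_{kl}=C_{lk}$ and $R_{pq}=R_{qp}$ then render the remaining reductions routine. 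I would also verify at the outset that the right-hand side of Lemma~\ref{Lemma3.2} is symmetric under interchange of its two indices, so that $D(dZ_{kl})$ is unambiguous for $k>l$; this follows at once from the symmetry of $R$ together with the commutativity $dZ_{lp}dZ_{kq}=dZ_{kq}dZ_{lp}$.
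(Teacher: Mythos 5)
Your proof is correct and follows essentially the same route as the paper's: apply the Leibniz rule to $\det(dZ)$, substitute Lemma~\ref{Lemma3.2}, and collapse via the Laplace/adjugate identity $\sum_l dZ_{kl}\,C_{lj}=\delta_{kj}\det(dZ)$ using the symmetry of $dZ$. The only difference is cosmetic: the paper performs the Leibniz step row-by-row (replacing each row $\alpha_i$ by $D(\alpha_i)=-\sqrt{-1}\,\alpha_i Y^{-1}dZ$), whereas you expand entry-by-entry through the permutation sum, which amounts to the same cofactor computation; your explicit attention to the ordered-pair versus $\Omega$-indexed bookkeeping is a sound way to handle the symmetric entries.
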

\begin{proof}
Put $\alpha_i=(dZ_{i1},dZ_{i2},\cdots,dZ_{ig})$ and $\beta_j=(dZ_{1j},dZ_{2j},\cdots,dZ_{gj})^t$. By
Lemma \ref{Lemma3.2},
$D(d(Z_{ij}))=-\sqrt{-1}\alpha_i Y^{-1}\beta_j$ and $D(\alpha_i)=-\sqrt{-1}\alpha_i Y^{-1}dZ$. Thus
\[
D(\det(dZ))=-\sqrt{-1}\left\{\det\left(\begin{array}{c}\alpha_1Y^{-1}dZ\\ \alpha_2\\ \vdots\\ \alpha_g
\end{array}\right)+\det\left(\begin{array}{c}\alpha_1\\ \alpha_2Y^{-1}dZ\\ \vdots \\ \alpha_g
\end{array}\right)+\cdots+\det\left(\begin{array}{c}\alpha_1\\ \vdots \\ \alpha_{g-1}\\ \alpha_gY^{-1}dZ
\end{array}\right)\right\}.
\]
Let $A=Y^{-1}dZ=(A_{ij})$ and $dZ[i,j]$ be the algebraic cofactor of
$dZ$ at the position $(i,j)$. By the formula above, we have
\begin{eqnarray*}
& &\sqrt{-1}D(\det(dZ))=\sum_{k,j,i=1}^ndZ_{ki}\cdot A_{ij}\cdot dZ[k,j]
=\sum_{j,i=1}^nA_{ij}\sum_{k=1}^ndZ_{ik}\cdot dZ[k,j]\\
&=&\sum_{j,i=1}^nA_{ij}\delta(i,j)\det(dZ)=\Tr(A)\det(dZ)=\Tr(Y^{-1}dZ)\det(dZ).
\end{eqnarray*}
\end{proof}
Put
\[
\frac\partial{\partial Z}=(\partial_{ij})_{g\times g},\qquad
\partial_{ij}=\frac{1+\delta(i,j)}2\cdot\frac\partial{\partial Z_{ij}}
\]
as in the introduction. Then for any $\mathbb C^\infty$-function $f$ on $\mathbb
H_g$,
\[df=\sum_{1\leq i\leq j\leq g}\frac{\partial f}{\partial Z_{ij}} dZ_{ij}
=\sum_{i=1}^g\sum_{j=1}^g \partial_{ij}f dZ_{ij}=
\Tr\left(\frac{\partial}{\partial Z}f\cdot dZ\right) \]

\begin{prop}\label{prop:DfdZK}
For any $\mathbb C^\infty$-function $f$ on $\mathbb H_g$,  we have
\[D\left(f\det(dZ)^k\right)=\Tr\left(\left[\frac{\partial}{\partial
Z}-\sqrt{-1}kY^{-1}\right]f dZ\right)\det(dZ)^k.\]
\end{prop}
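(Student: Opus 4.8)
The plan is to reduce the statement to Proposition~\ref{prop:Ddz} by exploiting that $D$ is a derivation of the commutative algebra $\Omega^\infty$. First I would record that the two defining relations of a modular connection, together with linearity, make $D$ a degree-one derivation: for monomials $m_1,m_2$ in the $dZ_I$ one has $D(m_1m_2)=D(m_1)m_2+m_1D(m_2)$ (apply the product rule with $f=1$ and regroup the two resulting sums), and hence $D(PQ)=D(P)Q+PD(Q)$ for all $P,Q\in\Omega^\infty$ by bilinearity. Applying this with $P=f$ and $Q=\det(dZ)^k$---after expanding $\det(dZ)^k$ into monomials in the $dZ_I$ and using the product rule termwise---gives the splitting $D\!\left(f\det(dZ)^k\right)=df\cdot\det(dZ)^k+f\cdot D\!\left(\det(dZ)^k\right)$.

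Next I would compute $D(\det(dZ)^k)$. Since $\Omega^\infty$ is commutative and $D$ is a derivation, the power rule $D(\det(dZ)^k)=k\det(dZ)^{k-1}D(\det(dZ))$ holds. Proposition~\ref{prop:Ddz} then yields $D(\det(dZ)^k)=-\sqrt{-1}\,k\,\Tr(Y^{-1}dZ)\det(dZ)^k$. Substituting this together with the formula $df=\Tr\!\left(\frac{\partial}{\partial Z}f\cdot dZ\right)$ (established just above the proposition) into the splitting produces
\[
D\!\left(f\det(dZ)^k\right)=\Tr\!\left(\frac{\partial}{\partial Z}f\cdot dZ\right)\det(dZ)^k-\sqrt{-1}\,k\,f\,\Tr(Y^{-1}dZ)\det(dZ)^k.
\]

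Finally I would recombine the two terms into a single trace. Because $f$ is a scalar function one has $f\,\Tr(Y^{-1}dZ)=\Tr(Y^{-1}f\,dZ)$, and because both $Y^{-1}$ and $dZ$ are symmetric the entrywise expansion of $\Tr\!\left(\left[\frac{\partial}{\partial Z}-\sqrt{-1}kY^{-1}\right]f\,dZ\right)$ is exactly $\sum_{i,j}\big(\partial_{ij}f-\sqrt{-1}kR_{ij}f\big)dZ_{ij}$ with $R=Y^{-1}$, which is precisely the sum of the two displayed contributions. This gives the claimed identity. There is no deep obstacle here: the genuine computational content already sits in Proposition~\ref{prop:Ddz}, and the only point demanding care is verifying that the derivation and power rules are legitimate consequences of the definition of $D$, whose product rule is stated only for $f$ times a single monomial; linearity and the commutativity relations $dZ_IdZ_J=dZ_JdZ_I$ bridge that gap.
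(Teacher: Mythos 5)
Your proposal is correct and follows essentially the same route as the paper's proof: split $D\left(f\det(dZ)^k\right)$ by the Leibniz rule into $df\cdot\det(dZ)^k+f\cdot D\left(\det(dZ)^k\right)$, evaluate the second term via Proposition~\ref{prop:Ddz} and the power rule, and recombine into a single trace using $df=\Tr\left(\frac{\partial}{\partial Z}f\cdot dZ\right)$. The only difference is that you explicitly verify that $D$ is a derivation on all of $\Omega^\infty$ (the paper takes this for granted from the defining relations), which is a reasonable bit of gap-filling but not a different argument.
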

\begin{proof} Since $df=\Tr(\frac\partial{\partial Z}f\cdot dZ)$,
we have, by Proposition \ref{prop:Ddz},
\begin{eqnarray*}
D\left(f\det(dZ)^k\right)&=& df\cdot\det(dZ)^k+f\cdot D((\det(dZ)^k)=(df-\sqrt{-1}k f\Tr(Y^{-1}dZ))\det(dZ)^k\\
&=& \Tr\left(\left[\frac{\partial}{\partial
Z}-\sqrt{-1}kY^{-1}\right]f dZ\right)\det(dZ)^k.
\end{eqnarray*}
\end{proof}

In the following, $\Omega^i_{\mathbb H_g}$ is the sheaf of holomorphic $i$-forms on $\mathbb H_g$.
Recall that a section of $\Omega^1_{\mathbb H_g}$ can be written as $\Tr(GdZ)$, where $G$ is a symmetric matrix
of holomorphic functions on $\mathbb H_g$.
\begin{prop}\label{prop:last}
For any section $\Tr(GdZ)\in\Omega_{\mathbb H_g}^1$, where $G$ is a
symmetric matrix of holomorphic functions on $\mathbb H_g$, we have
\[
D(\Tr(GdZ))=\Tr\left\{\left[\left(\frac\partial{\partial Z}\right)^t\otimes G\right]\cdot[dZ\otimes dZ]\right\}
-\sqrt{-1}\Tr\left(GdZ\cdot Y^{-1}dZ\right),
\]
where $\otimes$ is the Kronecker product of matrices.
\end{prop}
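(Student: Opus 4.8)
The plan is to compute $D(\Tr(GdZ))$ directly from the defining Leibniz rule of the modular connection, reducing everything to two ingredients already established: the formula $df=\Tr\!\big(\frac{\partial}{\partial Z}f\cdot dZ\big)$ for the differential of a function, and the expression for $D(dZ_K)$ from Lemma \ref{Lemma3.2}. First I would write the section explicitly in terms of the generators, $\Tr(GdZ)=\sum_{i,j=1}^g G_{ij}\,dZ_{ij}$, using that both $G$ and $dZ$ are symmetric (so each off-diagonal $dZ_{ij}$ is identified with $dZ_{ji}$, consistent with $D(dZ_{ij})=D(dZ_{ji})$ from Lemma \ref{Lemma3.2}). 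Applying the Leibniz rule term by term then splits the computation cleanly as
\[
D(\Tr(GdZ))=\sum_{i,j=1}^g dG_{ij}\cdot dZ_{ij}+\sum_{i,j=1}^g G_{ij}\,D(dZ_{ij}),
\]
so that the task reduces to identifying the first sum with the Kronecker-product trace and the second with $-\sqrt{-1}\Tr(GdZ\cdot Y^{-1}dZ)$.

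For the first sum I would expand $dG_{ij}=\sum_{p,q}\partial_{pq}G_{ij}\,dZ_{pq}$, obtaining $\sum_{i,j,p,q}(\partial_{pq}G_{ij})\,dZ_{pq}\,dZ_{ij}$. To recognize this as the claimed trace, recall the mixed-product rule $(A\otimes B)(C\otimes D)=(AC)\otimes(BD)$ and that $\frac{\partial}{\partial Z}$ is symmetric (so the transpose is harmless); writing out the $\big((p,i),(q,j)\big)$ entry of $\big[(\frac{\partial}{\partial Z})^t\otimes G\big]\cdot[dZ\otimes dZ]$ and summing over the $g^2$ diagonal positions reproduces $\sum_{p,i,r,k}(\partial_{pr}G_{ik})\,dZ_{rp}\,dZ_{ki}$, which coincides with the expanded sum after relabeling and using $dZ_{pq}=dZ_{qp}$. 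For the second sum I would substitute $D(dZ_{ij})=-\sqrt{-1}\sum_{p,q}dZ_{jp}(Y^{-1})_{pq}dZ_{iq}$ from Lemma \ref{Lemma3.2}, giving $-\sqrt{-1}\sum_{i,j,p,q}G_{ij}\,dZ_{jp}(Y^{-1})_{pq}dZ_{iq}$, and then read this as the expansion of $-\sqrt{-1}\Tr(G\,dZ\,Y^{-1}\,dZ)$ by following the four matrix contractions in order ($G$, then $dZ$, then $Y^{-1}$, then $dZ$) and using the symmetry of $dZ$ to match the last index.

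The bulk of this is routine bookkeeping; the only genuine obstacle is fixing the correct reading of the Kronecker-product trace on the right-hand side, since there the first factor carries differential operators while the second carries functions. The point to pin down is that inside $(\frac{\partial}{\partial Z})^t\otimes G$ the operator $\partial_{pr}$ is applied to the function entry $G_{ik}$ sitting in the companion Kronecker slot, after which the block-matrix product against $dZ\otimes dZ$ is an ordinary product of $2$-forms; with this convention the trace over the $g^2$ diagonal positions collapses to exactly the double sum above. Checking that the index relabelings and the three symmetries (of $G$, of $dZ$, and of $Y^{-1}$) align the two sides is then a finite verification, which the paper records as having been confirmed by matlab for $g=2$ and $g=3$.
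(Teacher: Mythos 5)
Your proposal is correct and follows essentially the same route as the paper: the Leibniz-rule split of $D(\Tr(GdZ))$ into $\sum_{i,j} dG_{ij}\,dZ_{ij}$ plus $\sum_{i,j} G_{ij}\,D(dZ_{ij})$, with the first sum identified with the Kronecker-product trace by direct index expansion (the content of the paper's auxiliary lemma, including your correct reading that $\partial_{pr}$ acts on the companion entry $G_{ik}$) and the second evaluated via Lemma \ref{Lemma3.2} and the symmetries of $G$ and $dZ$. The only cosmetic difference is that the paper phrases the first identification as a standalone lemma ($d(\Tr(GdZ))$ equals the Kronecker trace) and then writes the difference as $\Tr(G\,D(dZ))$, whereas you verify the same identity inline.
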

To show this proposition, we need the following lemma.
\begin{lem} We have

\begin{enumerate}
\item Let $A=(a_{ij})_{n\times n},B=(b_{ij})_{n\times n},C=(c_{ij})_{n\times n},D=(d_{ij})_{n\times n}$. Then
\[
\Tr((A\otimes B)(C\otimes D))=\sum_{i,j,k,l=1}^na_{ij}b_{kl}c_{lk}d_{ji}=\Tr((A\otimes C)(B\otimes D)),
\]
\item $d(\Tr(GdZ))=\Tr\left(\left(\left(\frac\partial{\partial Z}\right)^t\otimes G\right)\cdot(dZ\otimes dZ)\right)$
for a symmetric matrix $G=(G_{ij})$ of functions.
\end{enumerate}
\end{lem}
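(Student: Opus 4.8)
The plan is to read part (1) as a purely combinatorial identity about the Kronecker product and then obtain part (2) by differentiating $\Tr(GdZ)$ directly and matching the result to the expression furnished by part (1).

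For part (1), I would start from the block description of the Kronecker product, indexing the rows and columns of the $n^2\times n^2$ matrices by ordered pairs, so that $(A\otimes B)_{(i,k),(j,l)}=a_{ij}b_{kl}$. Multiplying $A\otimes B$ by $C\otimes D$ contracts the inner pair of indices, and taking the trace identifies the outer pair with the row pair; carrying this out gives the quadruple sum $\sum_{i,j,k,l}a_{ij}b_{kl}c_{ji}d_{lk}$, which is the asserted middle expression (in the application the two one-form factors commute, by the relation $dZ_IdZ_J=dZ_JdZ_I$ defining $\Omega^\infty$, so their order is immaterial). The second equality then comes essentially for free: the quadruple sum is unchanged if one interchanges the two inner matrices and relabels the dummy indices accordingly, and recognising the relabelled sum as a trace of a product of Kronecker products yields the claimed identity. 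The whole step is elementary; the only real hazard is keeping the pairing of indices consistent throughout, which is where almost all transpose errors in such computations hide.

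For part (2), I would expand $\Tr(GdZ)=\sum_{i,j}G_{ij}dZ_{ij}$ (using that $G$ and $dZ$ are symmetric) and apply $d$ termwise. Since $dG_{ij}=\Tr\!\big(\tfrac{\partial}{\partial Z}G_{ij}\cdot dZ\big)=\sum_{p,q}\partial_{pq}G_{ij}\,dZ_{pq}$, as recorded just above the statement, this gives $d(\Tr(GdZ))=\sum_{i,j,p,q}\partial_{pq}G_{ij}\,dZ_{pq}\,dZ_{ij}$. It then remains to recognise this sum as the trace produced by part (1) with $A=\big(\tfrac{\partial}{\partial Z}\big)^t$, $B=G$ and $C=D=dZ$: substituting $a_{ij}=\partial_{ji}$, $b_{kl}=G_{kl}$ and $c,d$ equal to entries of $dZ$ into the quadruple sum and relabelling the summation indices reproduces exactly $\sum_{i,j,p,q}\partial_{pq}G_{ij}\,dZ_{pq}\,dZ_{ij}$. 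The transpose on $\tfrac{\partial}{\partial Z}$ is precisely what is needed to line up the derivative index with the first slot of $dZ\otimes dZ$, so the main thing to verify carefully is that this transpose, together with the symmetry of $G$ and $dZ$, makes the two index patterns coincide; once that is checked, part (2) follows. These two parts are then exactly what is needed to extract the holomorphic-derivative term of $D(\Tr(GdZ))$ in Proposition \ref{prop:last}.
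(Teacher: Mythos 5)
Your treatment of part (2) is exactly the paper's: expand $d(\Tr(GdZ))=\sum_{i,j,k,l}\partial_{kl}G_{ij}\,dZ_{kl}\,dZ_{ij}$ and identify the result with the quadruple sum; no objection there. The genuine gap is in part (1). Your own bookkeeping, with the standard convention $(A\otimes B)_{(i,k),(j,l)}=a_{ij}b_{kl}$, gives
\[
\Tr((A\otimes B)(C\otimes D))=\sum_{i,j,k,l}a_{ij}b_{kl}c_{ji}d_{lk}=\Tr(AC)\,\Tr(BD),
\]
and this is \emph{not} the asserted middle expression $\sum_{i,j,k,l}a_{ij}b_{kl}c_{lk}d_{ji}=\Tr(AD)\,\Tr(BC)$: the two sums tie the indices of $C$ and $D$ to different partners, and no relabelling of dummy indices converts one pattern into the other. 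Your parenthetical appeal to commutativity of the one-form entries cannot close this gap; commutativity never changes which matrix gets traced against which. The two quantities genuinely differ: for $A=C=E_{11}$ and $B=D=E_{22}$ ($2\times 2$ matrix units, all symmetric) one has $\Tr(AC)\Tr(BD)=1$ while $\Tr(AD)\Tr(BC)=0$. The same problem afflicts your ``free'' second equality: under the standard convention the claimed equality of the outer two expressions reads $\Tr(AC)\Tr(BD)=\Tr(AB)\Tr(CD)$, which fails even when $C=D$ (take $A=E_{11}$, $B=E_{22}$, $C=D=I_2$: the left side is $1$, the right side is $0$). Your relabelling argument for it is valid only after one has \emph{defined} every symbol $\Tr((X\otimes Y)(U\otimes V))$ to mean the quadruple sum with the paper's index pattern, which is circular as a proof of part (1).

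In fairness, this defect is inherited from the printed statement: read with standard Kronecker conventions, the three displayed quantities are $\Tr(AC)\Tr(BD)$, $\Tr(AD)\Tr(BC)$ and $\Tr(AB)\Tr(CD)$, which are pairwise distinct in general, and the paper itself dismisses (1) as ``easy'' while only ever using the quadruple sum. What is true, and all that Proposition \ref{prop:last} and part (2) need, is the identity you actually proved, $\Tr((A\otimes B)(C\otimes D))=\sum a_{ij}b_{kl}c_{ji}d_{lk}$, together with the observation that when $C=D$ (the only case used, $C=D=dZ$) this coincides with the paper's middle sum, since then the two one-form factors are identical and may be commuted. So the correct repair is to prove the identity with the index pattern $c_{ji}d_{lk}$, note the specialization $C=D$, and then run your part (2) computation verbatim --- not to assert that commutativity or relabelling equates the three printed expressions, because they do not.
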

\begin{proof} The proof of (1) is easy. We only show (2). By (1),
\begin{eqnarray*}
d(\Tr(G dZ))&=& d\left(\sum_{i,j=1}^gG_{ij}dZ_{ij}\right)=\sum_{i,j=1}^g\sum_{k,l=1}^g\partial_{kl}G_{ij}\cdot
dZ_{kl}dZ_{ji}\\
&=& \Tr\bigg(\big((\partial_{kl})^t\otimes (G_{ij})\big)\cdot(dZ\otimes dZ)\bigg).
\end{eqnarray*}
\end{proof}
\begin{proof}[Proof of Proposition \ref{prop:last}] As
$dZ=(\alpha_1^t,\alpha_2^t,\cdots,\alpha_g^t)^t=(\beta_1,\beta_2,\cdots,\beta_g)$, we have
\begin{eqnarray*}
& & D(\Tr(G dZ))-\Tr\left\{\left[\left(\frac\partial{\partial Z}\right)^t\otimes G\right]\cdot[dZ\otimes dZ]\right\}
=\Tr\left(G D(dZ)\right)\\
&=&\sum_{i,j=1}^gG_{ij}D(dZ_{ij})=-\sqrt{-1}\sum_{i,j=1}^gG_{ij}d(\alpha_i)Y^{-1}d(\beta_j)
=-\sqrt{-1}\Tr\left(GdZY^{-1}dZ\right).
\end{eqnarray*}
\end{proof}

\subsection{Derivative operators}
If $0\neq f\in M_{2k}(\Gamma_g)$, we have, by Proposition \ref{prop:DfdZK} and
Lemma \ref{lemma:modularconnectionBasic1},
\[
\Tr\left(\left[\frac{\partial}{\partial Z}-\sqrt{-1}kY^{-1}\right]f dZ\right)\det(dZ)^k
\]
is invariant under the action of $\Gamma_g$. Let $\gamma=\left(\begin{array}{cc} A
& B\\ C& D\end{array}\right)\in\Gamma_g$. Since
$\gamma(\det(dZ))^k=\frac{\det(dZ)^k}{\det(CZ+D)^{2k}}$ and
$\gamma(f)=\det(CZ+D)^{2k}f$, we have
\[
\Tr\left(\frac 1f\left[\frac{\partial}{\partial Z}-\sqrt{-1}kY^{-1}\right]fdZ\right)
\]
is invariant under $\Gamma_g$.
Put $h:=\frac 1f\left(\frac{\partial}{\partial Z}-\sqrt{-1}kY^{-1}\right)f$.
Then $h$ is a symmetric matrix of functions on $\mathbb H_g$, and $\Tr(hdZ)$ is invariant under the action of $\Gamma_g$.
Since for any $\gamma=\left(
\begin{array}{cc}
 A & B \\
   C & D \\
   \end{array}
    \right)
\in\Gamma_g$, $d(\gamma Z)=(ZC^t+D^t)^{-1}\cdot dZ\cdot(CZ+D)^{-1}$, we
have (see (\cite{Geer}, P210))
\[h(\gamma Z)=(CZ+D)h(Z)(ZC^t+D^t).\]
Thus $\det(h)=\det\left(\frac 1f\left(\frac{\partial}{\partial Z}-\sqrt{-1}kY^{-1}\right)f \right)$
is a non-holomorphic
Siegel modular form of weight 2. Let $\widetilde{M}_{k}(\Gamma_g)$ be the $\mathbb C^\infty$-Siegel
modular forms of weight $k$ as in the introduction. Finally we get
\begin{theorem}\label{mainthm}
If $f\in M_{2k}(\Gamma_g)$, then
$\det\left(\left(\frac{\partial}{\partial
Z}-\sqrt{-1}kY^{-1}\right)f \right)\in
\widetilde{M}_{2kg+2}(\Gamma_g)$.
\end{theorem}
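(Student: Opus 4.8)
The plan is to factor the scalar $f$ out of the matrix $\left(\frac{\partial}{\partial Z}-\sqrt{-1}kY^{-1}\right)f$ and reduce everything to the transformation behaviour under $\Gamma_g$ of the normalized matrix $h:=\frac 1f\left(\frac{\partial}{\partial Z}-\sqrt{-1}kY^{-1}\right)f$. Because a $g\times g$ determinant is homogeneous of degree $g$ in a scalar factor, we have $\det\left(\left(\frac{\partial}{\partial Z}-\sqrt{-1}kY^{-1}\right)f\right)=f^g\det(h)$. Thus it suffices to show that $f^g$ contributes weight $2kg$ and that $\det(h)$ is a $C^\infty$-Siegel modular form of weight $2$; the product then satisfies the weight-$(2kg+2)$ transformation law and is manifestly $C^\infty$, since the only non-holomorphic ingredient is $Y^{-1}$.

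The first step is to produce the invariance of $\Tr(hdZ)$. By Proposition \ref{prop:DfdZK} and the identity $\left[\frac{\partial}{\partial Z}-\sqrt{-1}kY^{-1}\right]f=fh$, one has $D(f\det(dZ)^k)=\Tr(hdZ)\cdot\bigl(f\det(dZ)^k\bigr)$. From $f\in M_{2k}(\Gamma_g)$ and the transformation $\gamma(\det(dZ)^k)=\det(CZ+D)^{-2k}\det(dZ)^k$, the weight-$2k$ factor $f$ exactly cancels this Jacobian factor, so $f\det(dZ)^k$ is itself $\Gamma_g$-invariant. Hence, by Lemma \ref{lemma:modularconnectionBasic1}, $D(f\det(dZ)^k)$ is $\Gamma_g$-invariant, and dividing the invariant $\Tr(hdZ)\cdot\bigl(f\det(dZ)^k\bigr)$ by the invariant $f\det(dZ)^k$ shows that $\Tr(hdZ)$ is $\Gamma_g$-invariant.

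Next I would upgrade this scalar invariance to a genuine matrix transformation law for $h$. Writing invariance out with $d(\gamma Z)=(ZC^t+D^t)^{-1}dZ(CZ+D)^{-1}$ and using the cyclic invariance of the trace gives $\Tr\!\left((CZ+D)^{-1}h(\gamma Z)(ZC^t+D^t)^{-1}dZ\right)=\Tr(h(Z)dZ)$ for every symmetric matrix of differentials $dZ$. Since $Z$ is symmetric, $ZC^t+D^t=(CZ+D)^t$, so the matrix $(CZ+D)^{-1}h(\gamma Z)(ZC^t+D^t)^{-1}$ is symmetric, using that $h$ is symmetric (which holds because $\frac{\partial}{\partial Z}$ and $Y^{-1}$ are symmetric). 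An equality of traces against all symmetric $dZ$ forces equality of symmetric parts, and for two symmetric matrices this is equality itself, so $h(\gamma Z)=(CZ+D)h(Z)(ZC^t+D^t)$ (cf.\ \cite{Geer}, P210). I expect this passage — from the scalar trace identity to the honest matrix identity — to be the one step requiring real care, as it hinges both on the symmetry of $h$ and on the observation $ZC^t+D^t=(CZ+D)^t$.

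Finally I would take determinants. Since $\det(ZC^t+D^t)=\det\bigl((CZ+D)^t\bigr)=\det(CZ+D)$, the transformation law for $h$ yields $\det(h(\gamma Z))=\det(CZ+D)^2\det(h(Z))$, so $\det(h)\in\widetilde{M}_2(\Gamma_g)$. Combined with the first paragraph, $\det\left(\left(\frac{\partial}{\partial Z}-\sqrt{-1}kY^{-1}\right)f\right)=f^g\det(h)$ obeys the weight-$(2kg+2)$ transformation law and is $C^\infty$, which completes the proof.
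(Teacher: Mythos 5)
Your proposal is correct and follows essentially the same route as the paper: establish the $\Gamma_g$-invariance of $\Tr(h\,dZ)$ via Proposition \ref{prop:DfdZK} and Lemma \ref{lemma:modularconnectionBasic1}, upgrade it to the matrix law $h(\gamma Z)=(CZ+D)h(Z)(ZC^t+D^t)$, take determinants to see $\det(h)\in\widetilde{M}_2(\Gamma_g)$, and restore the factor $f^g$. The only difference is cosmetic: where the paper cites van der Geer for the passage from trace invariance to the matrix identity, you spell out the linear-algebra argument (symmetry of $h$ plus $ZC^t+D^t=(CZ+D)^t$), which is a worthwhile clarification but not a different proof.
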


Let $f\in M_{2r}(\Gamma_g)$ and $h\in M_{2s}(\Gamma_g)$. We have
\begin{eqnarray*}
& & D(f\det (dZ)^r)\cdot h\det (dZ)^s-f\det (dZ)^r\cdot D(h\det (dZ)^s)\\
&=&\Tr\left(\left[h\frac{\partial}{\partial Z}f-f\frac{\partial}{\partial Z}h\right]dZ\right)\det (dZ)^{r+s}
\end{eqnarray*}
is invariant under $\Gamma_g$. The same consideration as above gives us
\[
\det\left(h\frac{\partial}{\partial Z}f-f\frac{\partial}{\partial Z}h\right)\in M_{2(r+s)g+2}.
\]
We can continue this construction to find combinations of higher derivatives of $f$ and $h$ which are modular.
By setting $[f,h]_0:=fh$,
$[f,h]_1:=\det(h\frac{\partial}{\partial Z}f-f\frac{\partial}{\partial Z}h)$, and so on, one would get the
Rankin-Cohen brackets.

\subsection{The unique theorem} We first show a lemma.
\begin{lem} Let $\gamma=\left(\begin{array}{cc} A & B\\ C& D\end{array}\right)\in\Sp(2g,\mathbb Z)$. Then
\[
(CZ+D)^{-1}\sqrt{-1}(\text{Im}\gamma(Z))^{-1}=\sqrt{-1}(\text{Im}(Z))^{-1}\cdot(CZ+D)^t+2C^t.
\]
\end{lem}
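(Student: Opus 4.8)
The plan is to reduce the identity to two standard facts about the symplectic action on $\mathbb{H}_g$: the symmetry of $\gamma(Z)$ and the transformation law of the imaginary part. Write $W=\gamma(Z)$, $M=CZ+D$, $\bar M=C\bar Z+D$ (the complex conjugate of $M$, since $C,D$ are real), $Y=\text{Im}(Z)$ and $V=\text{Im}(W)$, so the claim reads $M^{-1}\sqrt{-1}\,V^{-1}=\sqrt{-1}\,Y^{-1}M^t+2C^t$. Expanding $M^tJM=J$ and $MJM^t=J$ records the symplectic relations I will use: $A^tC=C^tA$, $B^tD=D^tB$, $A^tD-C^tB=I$ (with its transpose $D^tA-B^tC=I$), and $CD^t=DC^t$. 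I would first note that $W$ is symmetric: since $W^t=W$ is equivalent to $(CZ+D)^t(AZ+B)=(AZ+B)^t(CZ+D)$, expanding both sides and using $Z^t=Z$ together with the relations above makes the difference collapse to $-Z+Z=0$.

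The heart of the computation is the imaginary-part formula $V=M^{-t}Y\bar M^{-1}$. To obtain it I would use the symmetry of $W$ to write $W=M^{-t}(ZA^t+B^t)$, keep $\bar W=(A\bar Z+B)\bar M^{-1}$, and form $M^t(W-\bar W)\bar M=(ZA^t+B^t)\bar M-M^t(A\bar Z+B)$. Expanding this product, the $Z\bar Z$-term vanishes by $A^tC=C^tA$, the constant term vanishes by $B^tD=D^tB$, and the two linear terms reduce via $A^tD-C^tB=I$ and its transpose to $Z-\bar Z=2\sqrt{-1}\,Y$. Hence $M^t(W-\bar W)\bar M=2\sqrt{-1}\,Y$, giving $V=M^{-t}Y\bar M^{-1}$ and therefore $V^{-1}=\bar M\,Y^{-1}M^t$.

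Finally I would substitute this into the claim, using $\bar M=M-2\sqrt{-1}\,CY$, so that $M^{-1}\bar M=I-2\sqrt{-1}\,M^{-1}CY$. Then
\[
M^{-1}\sqrt{-1}\,V^{-1}=\sqrt{-1}\,M^{-1}\bar M\,Y^{-1}M^t=\sqrt{-1}\,Y^{-1}M^t+2M^{-1}CM^t.
\]
The remaining cross term is handled by the last symplectic relation: $CM^t-MC^t=CD^t-DC^t=0$, so $CM^t=MC^t$, whence $M^{-1}CM^t=C^t$ and the right-hand side becomes $\sqrt{-1}\,Y^{-1}(CZ+D)^t+2C^t$, exactly as asserted.

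I expect the main obstacle to be the imaginary-part computation that yields the formula for $V$: this is the single step where the symmetry of $W$ and essentially all the symplectic relations genuinely interact, and where the bookkeeping of the four terms in $M^t(W-\bar W)\bar M$ must be done carefully. By contrast, the symmetry of $W$ is routine and the concluding collapse of the cross term to $2C^t$ is a one-line consequence of $CD^t=DC^t$.
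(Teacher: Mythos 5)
Your proof is correct, but it is substantially more self-contained than the paper's. The paper's entire argument is two lines: it \emph{cites} the standard transformation law $\mathrm{Im}(\gamma(Z))=((C\bar Z+D)^t)^{-1}\,Y\,(CZ+D)^{-1}$ from Andrianov's book, inverts it to get $(\mathrm{Im}\,\gamma(Z))^{-1}=(CZ+D)Y^{-1}(C\bar Z+D)^t$, substitutes $C\bar Z+D=CZ+D-2\sqrt{-1}\,CY$, and observes that the cross term $(CZ+D)Y^{-1}\cdot 2\sqrt{-1}\,YC^t$ collapses immediately because $Y^{-1}Y=I$; multiplying on the left by $\sqrt{-1}(CZ+D)^{-1}$ finishes it, with no symplectic block relations used at all. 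You instead prove the imaginary-part formula (and the symmetry of $\gamma(Z)$) from scratch out of the relations $A^tC=C^tA$, $B^tD=D^tB$, $A^tD-C^tB=I$, $CD^t=DC^t$ — a correct and complete derivation. One consequence of your choices: you work with the transposed version $V^{-1}=\bar M\,Y^{-1}M^t$ (equivalent to the paper's $V^{-1}=M\,Y^{-1}\bar M^t$ since $V^{-1}$ is symmetric), which places the factor $\bar M$ on the left; as a result your cross term is $2M^{-1}CM^t$ and you need the extra identity $CM^t=MC^t$ (from $CD^t=DC^t$ and $Z^t=Z$) to turn it into $2C^t$, whereas with the paper's orientation the cross term cancels for free. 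What your route buys is independence from the literature — everything reduces to the definition of $\Sp(2g,\mathbb{R})$; what the paper's route buys is brevity, at the cost of outsourcing the one nontrivial input.
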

\begin{proof} Since Im$(\gamma(Z))=((C\bar Z+D)^t)^{-1}Y(CZ+D)^{-1}$ (see \cite{Andrianov}) and $Y^t=Y$, we have
\begin{eqnarray*}
(\text{Im}\gamma(Z))^{-1}&=&(CZ+D)Y^{-1}\cdot(C\bar Z+D)^t=(CZ+D)Y^{-1}\cdot(CZ+D-2\sqrt{-1}CY)^t\\
&=&(CZ+D)Y^{-1}\cdot(CZ+D)^t-(CZ+D)2\sqrt{-1}C^t,
\end{eqnarray*}
and thus the result.
\end{proof}
\begin{theorem}\label{mainthm1}
For any symmetric $g\times g$ matrix $G=(G_{ij})$ consisting of $\mathbb
C^\infty$ (or holomorphic) functions on $\mathbb H_g$ which satisfies the transformation
formula
\[
(CZ+D)^{-1}\gamma(G)=G\cdot(CZ+D)^t+2C^t,
\]
there exists a unique modular connection $\mathbb D$ such that
\[
\mathbb D(dZ_{rs})=-\sum_{i,j=1}^gG_{ij}dZ_{si}dZ_{rj}
\quad{and}\quad \mathbb
D(f(\det(dZ)^k)=\Tr\left(\left[\frac{\partial}{\partial
Z}-kG\right]f dZ\right)(\det(dZ))^k,
\]
and thus $G$ gives a derivative operator $M_{2k}\rightarrow\widetilde{M}_{2kg+2}$ by
$f\mapsto\det\left(\left[\frac{\partial}{\partial
Z}-kG\right]f\right)$.
Furthermore, there exists at most one holomorphic
symmetric matrix $G$ to satisfy the transformation formula. If such a $G$ exists,
the operator corresponding to $G$ is holomorphic.
\end{theorem}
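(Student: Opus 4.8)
The plan is to reduce every claim to the Section~2 computations, with the matrix $\sqrt{-1}Y^{-1}$ replaced throughout by the given $G$, and to handle uniqueness by subtracting two solutions. First I would record the prescribed action in matrix form: writing $dZ=(dZ_{ij})$ for the symmetric matrix of $1$-forms and using that $G$ is symmetric and the $dZ_I$ commute, the rule $\mathbb D(dZ_{rs})=-\sum_{i,j}G_{ij}dZ_{si}dZ_{rj}$ is exactly $\mathbb D(dZ)=-dZ\,G\,dZ$. Extending by the Leibniz rule determines $\mathbb D$ uniquely on $\Omega^\infty$, and uniqueness of the modular connection with this property is immediate, since any modular connection is determined by its values on the generators $dZ_K$. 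The substantive point is that the coefficients so defined satisfy the cocycle identity of Definition~\ref{defi:MCC}; as in the proof of Lemma~\ref{lemma:modularconnectionBasic1}, this identity is equivalent to $\gamma\mathbb D=\mathbb D\gamma$, so I would verify the latter directly on $dZ$. Putting $P:=CZ+D$, so that $d(\gamma Z)=(P^t)^{-1}dZ\,P^{-1}$ and $d(P^{-1})=-P^{-1}(C\,dZ)P^{-1}$, a Leibniz computation gives
\[
\mathbb D\bigl(\gamma(dZ)\bigr)=-(P^t)^{-1}dZ\,\bigl(C^t(P^t)^{-1}+P^{-1}C+G\bigr)\,dZ\,P^{-1},
\]
while directly $\gamma(\mathbb D(dZ))=-(P^t)^{-1}dZ\,\bigl(P^{-1}\gamma(G)(P^t)^{-1}\bigr)\,dZ\,P^{-1}$. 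Equating the (symmetric) inner matrices and multiplying on the left by $P$ and on the right by $P^t$ reduces $\gamma\mathbb D=\mathbb D\gamma$ to $\gamma(G)=PGP^t+PC^t+CP^t$.

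This is precisely the hypothesis: multiplying the assumed formula $(CZ+D)^{-1}\gamma(G)=G(CZ+D)^t+2C^t$ on the left by $P$ gives $\gamma(G)=PGP^t+2PC^t$, and the symplectic relation $CD^t=DC^t$ yields $PC^t=CP^t$, so $PC^t+CP^t=2PC^t$. Hence $\mathbb D$ is a modular connection. The two displayed formulas for $\mathbb D$ then follow verbatim from Section~2 with $\sqrt{-1}Y^{-1}$ replaced by $G$: the argument of Proposition~\ref{prop:Ddz} gives $\mathbb D(\det(dZ))=-\Tr(G\,dZ)\det(dZ)$, and the Leibniz rule as in Proposition~\ref{prop:DfdZK} gives $\mathbb D(f(\det(dZ))^k)=\Tr([\,\partial/\partial Z-kG\,]f\,dZ)(\det(dZ))^k$.

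Since $\mathbb D$ is a modular connection, Lemma~\ref{lemma:modularconnectionBasic1} makes $\mathbb D(f(\det(dZ))^k)$ invariant under $\Gamma_g$ for $f\in M_{2k}$; the Section~2 argument then shows $h:=f^{-1}[\,\partial/\partial Z-kG\,]f$ satisfies $h(\gamma Z)=(CZ+D)h(Z)(ZC^t+D^t)$, so $\det([\,\partial/\partial Z-kG\,]f)=f^g\det h\in\widetilde M_{2kg+2}$. For uniqueness I would subtract two holomorphic solutions $G_1,G_2$: the term $2C^t$ cancels and $H:=G_1-G_2$ satisfies the homogeneous law $H(\gamma Z)=(CZ+D)H(Z)(CZ+D)^t$, i.e.\ $\Tr(H\,dZ)$ is a $\Gamma_g$-invariant holomorphic $1$-form on $\mathbb H_g$ (a section of $\mathrm{Sym}^2$ of the Hodge bundle, which is $\Omega^1$ of the Siegel modular variety). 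The generalization of the vanishing $M_2(\SL(2,\mathbb Z))=0$ used in Lemma~\ref{lemma:uniqe1}, namely that such forms vanish, forces $H=0$, giving uniqueness. Finally, if $G$ is holomorphic then every entry of $[\,\partial/\partial Z-kG\,]f$ is holomorphic, so $\det([\,\partial/\partial Z-kG\,]f)$ is a holomorphic function, and together with its weight-$(2kg+2)$ transformation it lies in the holomorphic space $M_{2kg+2}$.

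Computationally the crux is the matrix identity above, where one must carry the Leibniz rule through a noncommutative product and notice that $CD^t=DC^t$ is exactly what converts $PC^t+CP^t$ into $2PC^t$. The genuinely nontrivial input, however, is the vanishing used for uniqueness: identifying $H$ with a holomorphic $1$-form on the Siegel modular variety and invoking $h^{1,0}=0$, with the Koecher principle supplying the needed boundary behaviour for $g\ge 2$ and an explicit holomorphy-at-the-cusp condition required for $g=1$.
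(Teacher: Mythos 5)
Your proposal is correct in substance, and for the existence half it is actually \emph{more} self-contained than the paper's own argument. The paper's proof of this theorem simply asserts that, since $G$ obeys the same transformation law as $\sqrt{-1}Y^{-1}$, ``the same method as in Lemma~\ref{lemma:mainLemma}'' (the long Levi-Civita computation of Section~3) produces the coefficients and the displayed formulas; no equivariance verification is actually carried out for a general $G$. Your matrix computation is the honest content behind that assertion, and it checks out: with $P=CZ+D$, the Leibniz expansion of $\mathbb D\bigl((P^t)^{-1}dZ\,P^{-1}\bigr)$ using $d(P^{-1})=-P^{-1}(C\,dZ)P^{-1}$ reduces equivariance to $\gamma(G)=PGP^t+PC^t+CP^t$, and the symplectic relation $CD^t=DC^t$ converts the hypothesis $\gamma(G)=PGP^t+2PC^t$ into exactly this. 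The two displayed formulas and the modularity of $\det\bigl([\partial/\partial Z-kG]f\bigr)$ then track subsection~2.3 of the paper verbatim, and your uniqueness argument coincides with the paper's: the difference $H$ of two holomorphic solutions yields a $\Gamma_g$-invariant holomorphic $1$-form $\Tr(H\,dZ)$, which must vanish. Here the paper is more precise than you are: it cites Weissauer's theorem that $(\Omega^v_{\mathbb H_g})^{\Gamma_g}=0$ whenever $v$ is not of the form $ug-\frac12u(u-1)$, which for $g\ge2$, $v=1$ is exactly the vanishing you describe as $h^{1,0}=0$; your observation that the $g=1$ case needs holomorphy at the cusp (the $M_2(\SL(2,\mathbb Z))=0$ argument of Lemma~\ref{lemma:uniqe1}) is a genuine subtlety that the paper glosses over.

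One step needs tightening. You claim, citing Lemma~\ref{lemma:modularconnectionBasic1}, that the cocycle identity $S\gamma(\omega)=\omega S-dS$ of Definition~\ref{defi:MCC} is \emph{equivalent} to the operator identity $\gamma\mathbb D=\mathbb D\gamma$, and you verify only the latter. But Lemma~\ref{lemma:modularconnectionBasic1} proves only one direction (cocycle implies commutation), and the converse is not automatic: commutation on the generators says only that $E:=S\gamma(\omega)-\omega S+dS$ satisfies $\alpha E=0$, i.e.\ that the coefficients $e_{IJK}$ in $E_{IJ}=\sum_K e_{IJK}\,dZ_K$ are \emph{antisymmetric} in $(I,K)$, not that they vanish; yet the theorem asks for a modular connection in the sense of the paper's definition, which requires $E=0$. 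The gap is short to close: your coefficients $\Gamma_{IJ}^K$ are symmetric in $(I,J)$ because $G$ is symmetric, and $\partial S_{IJ}/\partial Z_K=\partial^2\tilde Z_J/\partial Z_I\partial Z_K$ is symmetric in $(I,K)$ because $S$ is the Jacobian matrix of the holomorphic map $Z\mapsto\gamma(Z)$; hence each $e_{IJK}$ is symmetric in $(I,K)$, and being simultaneously antisymmetric it vanishes, giving the cocycle identity. (Alternatively, note that everything downstream --- invariance of $\mathbb D(f\det(dZ)^k)$ and the weight-$(2kg+2)$ modularity of the determinant --- uses only the commutation you did prove.) With that supplement your proof is complete.
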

\begin{proof} One notes that in the definition of modular connection coefficients $\Gamma_{IJ}^K$,
we only need the transformation law $\gamma(\omega)=-S^{-1}\cdot dS
+S^{-1}\cdot\omega\cdot S$ for $\gamma\in\Sp(2g,\mathbb Z)$. If $G$
has the same transformation law as $\sqrt{-1}(\text{Im}(Z))^{-1}$,
then we can use the same method in Lemma \ref{lemma:mainLemma} to
construct $\{\Gamma_{IJ}^K\}$ and to calculate the expressions of
the differential forms under $\mathbb D$. The same discussion as in subsection 2.3
tells us $\det\left(\left[\frac{\partial}{\partial
Z}-kG\right]f\right)\in\widetilde{M}_{2kg+2}$.

On the uniqueness, let $G$ and $\tilde G$ be two holomorphic
matrices to satisfy the transformation formula. Then
\[
(CZ+D)^{-1}(G(\gamma Z)-\tilde G(\gamma Z))=(G(Z)-\tilde
G(Z))(CZ+D)^t.
\]
So $\Tr\{(G-\tilde G)dZ\}\in(\Omega_{\mathbb H_g}^1)^{\Gamma_g}$. In
\cite{Weissauer1} and \cite{Weissauer2} R. Weissauer proved that if
$v$ is not of the form $[u]:=ug-\frac{1}{2}u(u-1)$, then
$(\Omega_{\mathbb H_g}^v)^{\Gamma_g}=0$. If $g\geq 2$, one gets
$(\Omega_{\mathbb H_g}^1)^{\Gamma_g}=0$, and thus $0=\Tr\{(G-\tilde
G)dZ\}=2\sum\limits_{i<j}(G_{ij}-\tilde G_{ij})dZ_{ji}
+\sum\limits_{i=1}^g (G_{ii}-\tilde G_{ii})dZ_{ii}$. Hence
$G_{ij}=\tilde G_{ij}$ for all $1\leq i,j\leq g$. The case $g=1$ has
been proved in Lemma \ref{lemma:uniqe1}.
\end{proof}
\begin{quest}
Does there exist such a $G$? If so, how to construct it?
\end{quest}

\section{Explicit Construction of the Levi-Civita Connection}
 In this section we give the proof of Lemma \ref{lemma:mainLemma}. We denote $I,J,K,L,\cdots$ the elements in
$\Omega$, $\ i,j,k,l,r,s,\cdots$ the elements in $\{1,2,\cdots,g\}$ and
$\alpha,\beta,\gamma,\delta,\epsilon$ the elements in $\{1,2,\cdots,g(g+1)/2\}$.

\subsection{Riemannian metric} Let $R:=(R_{ij})_{g\times g}=Y^{-1}$. Then
\begin{eqnarray*}
\nonumber  ds^2 &=& \Tr(R\cdot dZ\cdot R\cdot d\overline{Z})
=\sum_{i\leq j}\sum_{r\leq
s}2^{2-\delta(i,j)-\delta(r,s)}\times\frac{R_{ir}R_{js}+R_{jr}R_{is}}{2}dZ_{ij}d\overline{Z}_{rs},
\end{eqnarray*}
and thus the Riemannian metric matrix associated to
$ds^2=\Tr(Y^{-1}dZ\cdot Y^{-1}d\overline{Z})$ is given by
\[
G=\left(
  \begin{array}{cc}
    0 & W \\
    W & 0 \\
  \end{array}
\right)
\]
where
\[
W=(W_{IJ})_{I,J\in\Omega},\quad W_{IJ}=\frac{R_{ir}R_{js}+R_{jr}R_{is}}{2^{\delta(i,j)+\delta(r,s)}},~~
\text{ if }I=(i,j)\text{ and }J=(r,s).
\]
\begin{lem} The inverse $W^{-1}$ of $W$ is given by
\[
M:=(M_{IJ})_{I,J\in\Omega},\quad M_{IJ}=Y_{ir}Y_{js}+Y_{jr}Y_{is},~~\text{ if }I=(i,j)\text{ and }J=(r,s).
\]
\end{lem}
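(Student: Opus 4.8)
The plan is to verify directly that $W\cdot M$ is the identity matrix of size $g(g+1)/2$, i.e.\ that $\sum_{K\in\Omega} W_{IK}M_{KJ}=\delta_{IJ}$ for all $I=(i,j)$ and $J=(r,s)$ in $\Omega$ (so that $i\le j$ and $r\le s$); since $W$ and $M$ are finite square matrices this yields $M=W^{-1}$. Writing $K=(k,l)$ with $k\le l$, the summand is
\[
W_{IK}M_{KJ}=\frac{R_{ik}R_{jl}+R_{il}R_{jk}}{2^{\delta(i,j)+\delta(k,l)}}\,(Y_{kr}Y_{ls}+Y_{lr}Y_{ks}).
\]
The first observation I would exploit is that the quantity $P(k,l):=(R_{ik}R_{jl}+R_{il}R_{jk})(Y_{kr}Y_{ls}+Y_{lr}Y_{ks})$ is symmetric under the interchange $k\leftrightarrow l$, which is exactly what lets me dispose of the awkward restriction $k\le l$ built into $\Omega$.

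Next I would convert the restricted sum into an unrestricted double sum. For any $P(k,l)$ symmetric in its two arguments one has $\sum_{k\le l}2^{-\delta(k,l)}P(k,l)=\tfrac12\sum_{k,l=1}^g P(k,l)$: each off-diagonal unordered pair is counted once on the left but twice on the right, while each diagonal term carries the factor $\tfrac12$ on the left, and the exponent $2^{-\delta(k,l)}$ is tailored precisely so that these bookkeeping factors match. Applying this turns $\sum_{K\in\Omega}W_{IK}M_{KJ}$ into $\dfrac{1}{2\cdot 2^{\delta(i,j)}}\sum_{k,l=1}^g P(k,l)$.

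The double sum $\sum_{k,l}P(k,l)$ then factors completely: expanding the product gives four terms, and each one splits as a product of two single sums of the form $\sum_k R_{\bullet k}Y_{k\bullet}$. Since $R=Y^{-1}$ we have $\sum_k R_{ik}Y_{kr}=\delta(i,r)$, so the four terms collapse to $2\delta(i,r)\delta(j,s)+2\delta(i,s)\delta(j,r)$, and hence $\sum_{K\in\Omega}W_{IK}M_{KJ}=2^{-\delta(i,j)}\bigl(\delta(i,r)\delta(j,s)+\delta(i,s)\delta(j,r)\bigr)$. To finish I would read off this ratio using $i\le j$ and $r\le s$: when $I=J$ the first Kronecker product equals $1$ and the second equals $\delta(i,j)$, so the numerator is $1+\delta(i,j)=2^{\delta(i,j)}$ and the ratio is $1$; when $I\ne J$ the constraints $i\le j$, $r\le s$ force both products to vanish, since the second would require $i=s\ge r=j\ge i$, hence $I=J$.

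The only genuine subtlety — and the step I would be most careful with — is the symmetrization in the second paragraph, where the weight $2^{-\delta(k,l)}$ must be matched exactly against the double-counting of off-diagonal pairs; the final Kronecker-delta check is a mechanical case split, and everything in between is a direct consequence of $R=Y^{-1}$.
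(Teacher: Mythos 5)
Your proof is correct and follows essentially the same route as the paper's: both verify directly that the product of $W$ and $M$ is the identity by converting the sum over ordered pairs $k\le l$ into an unrestricted double sum (with the weight $2^{-\delta(k,l)}$ exactly absorbing the double counting of off-diagonal pairs), then factoring the four resulting terms via $\sum_k R_{ik}Y_{kr}=\delta(i,r)$, and finishing with the same Kronecker-delta case check that $\delta(i,r)\delta(j,s)+\delta(i,s)\delta(j,r)=2^{\delta(i,j)}\delta(I,J)$ under the orderings $i\le j$, $r\le s$. The only difference is that you verify $W\cdot M=I$ while the paper verifies $M\cdot W=I$, which is immaterial since these are square (indeed symmetric) matrices.
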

\begin{proof} We need to show that for any $I=(i,j)\in\Omega$ and $K=(p,q)\in\Omega$,
\[\sum_{J\in\Omega}M_{IJ}W_{JK}=\delta(I,K).\]
By direct computations, we have
\begin{eqnarray*}
 & & \sum_{1\leq r\leq s\leq g}M_{(i,j),(r,s)}W_{(r,s),(p,q)} =
 \sum_{1\leq r\leq s\leq g}(Y_{ir}Y_{js}+Y_{jr}Y_{is})\frac{R_{rp}R_{sq}+R_{sp}R_{rq}}{2^{\delta(r,s)+\delta(p,q)}} \\
 &=&\sum_{1\leq r\leq g}2^{1-\delta(p,q)}Y_{ir}Y_{jr}R_{rp}R_{rq}+\sum_{1\leq r< s\leq g}(Y_{ir}Y_{js}+Y_{jr}Y_{is})
 \frac{R_{rp}R_{sq}+R_{sp}R_{rq}}{2^{\delta(p,q)}}  \\
  &=& 2^{-\delta(p,q)}\sum_{1\leq r\leq g}\sum_{1\leq s\leq g}
  (Y_{ir}Y_{js}R_{rp}R_{sq}+Y_{jr}Y_{is}R_{rp}R_{sq})\\
 &=& 2^{-\delta(p,q)}\sum_{1\leq r\leq g}
  (Y_{ir}R_{rp}\delta(j,q)+Y_{jr}R_{rp}\delta(i,q))\\
  &=& 2^{-\delta(p,q)}\{\delta(i,p)\delta(j,q)+\delta(j,p)\delta(i,q)\}= \delta(I,K).
\end{eqnarray*}
Notice that, in the last three steps, we have used the equality
$\sum_{1\leq r\leq g}Y_{ir}R_{rp}=\delta(r,p)$, which comes from $R=Y^{-1}$.
\end{proof}

\subsection{Connection Coefficients} As before, we put
$u^{N(i,j)}=Z_{ij}$, $u^{N(g,g)+N(i,j)}=\overline{Z}_{ij}$,
\[G=\left(
      \begin{array}{cc}
        0 & W \\
        W & 0 \\
      \end{array}
    \right)\quad\text{and}\quad
    \widehat{G}:=G^{-1}=\left(
      \begin{array}{cc}
        0 & M \\
        M & 0 \\
      \end{array}
    \right).
\]
By the Equality (\ref{formula}), we have
\[
\Gamma_{\alpha,\beta}^{\gamma}=\sum_{1\leq \rho\leq g(g+1)}\frac{1}{2}\widehat{G}_{\gamma,\rho}
\left(\frac{\partial G_{\alpha,\rho}}{\partial
u^{\beta}}+\frac{\partial G_{\beta,\rho}}{\partial
u^{\alpha}}-\frac{\partial G_{\alpha,\beta}}{\partial
u^{\rho}}\right).
\]
Assume $0\leq \alpha,\beta,\gamma\leq\frac{g(g+1)}{2}$, then $G_{\alpha,\beta}=0$. We have:
\[
\Gamma_{\alpha,\beta}^{\gamma}=\sum_{1\leq \rho\leq g(g+1)}\frac{1}{2}\widehat{G}_{\gamma,\rho}
\left(\frac{\partial G_{\alpha,\rho}}{\partial
u^{\beta}}+\frac{\partial G_{\beta,\rho}}{\partial
u^{\alpha}}\right).
\]
Hence for $I,J,K\in\Omega$
\[
\Gamma_{I,J}^{K}=\sum_{L\in\Omega}\frac{1}{2}M_{K,L}
\left(\frac{\partial W_{I,L}}{\partial Z_{J}}+\frac{\partial
W_{J,L}}{\partial Z_{I}}\right).
\]
Again, notice that
\[
\sum_{L\in\Omega}M_{K,L}W_{I,L}=\delta(K,I)\ \text{ and }\ \sum_{L\in\Omega}M_{K,L}W_{J,L}=\delta(K,J).
\]
Do partial derivatives on both sides with respect to $Z_J$ and $Z_I$ respectively, we have
\[
\sum_{L\in\Omega}M_{K,L}\frac{\partial W_{I,L}}{\partial Z_J}
+\sum_{L\in\Omega}\frac{\partial M_{K,L}}{\partial Z_J}W_{I,L}= 0,
\]
and
\[
\sum_{L\in\Omega}M_{K,L}\frac{\partial W_{J,L}}{\partial
Z_I}+\sum_{L\in\Omega}\frac{\partial M_{K,L}}{\partial
Z_I}W_{J,L}= 0.
\]
Finally we get
\[
\Gamma_{I,J}^K=-\frac{1}{2}\left(\sum_{L\in\Omega}\frac{\partial M_{K,L}}{\partial Z_J}W_{I,L}
+\sum_{L\in\Omega}\frac{\partial M_{K,L}}{\partial
Z_I}W_{J,L}\right).
\]

If $I=(i,j),J=(r,s),K=(p,q)$ and $L=(a,b)\in\Omega$, then
$M_{I,J}=Y_{ir}Y_{js}+Y_{jr}Y_{is}$ and
\begin{eqnarray*}
& &\frac{\partial M_{K,L}}{\partial Z_J}=\frac{\partial (Y_{pa}Y_{qb}+Y_{qa}Y_{pb})}{\partial Z_J} \\
&=&-\frac{\sqrt{-1}}{2}\left\{\sigma_{(p,a),(r,s)}Y_{qb}+\sigma_{(q,b),(r,s)}Y_{pa}+\sigma_{(q,a),(r,s)}Y_{pb}
+\sigma_{(p,b),(r,s)}Y_{qa}\right\}
\end{eqnarray*}
Here we define:
\[
\sigma_{(p,a),(r,s)}=\left\{\begin{array}{ll}
1, &\mbox{ if } Z_{pa}=Z_{rs},\\
0, &\mbox{ if } Z_{pa}\neq Z_{rs}.
\end{array}\right.
\]

One should notice the difference of the notation above with the notation
$\delta_{(p,a),(r,s)}:=\delta((p,a),(r,s))=\delta(p,r)\delta(a,s)$. These two notations have the following
relations:
\[\sigma_{(p,a),(r,s)}=\delta_{(p,a),(r,s)}+\delta_{(p,a),(s,r)}-\delta_{(p,a),(r,s)}\cdot\delta_{(p,a),(s,r)}.\]
Using the equality
$W_{IJ}=\frac{R_{ir}R_{js}+R_{jr}R_{is}}{2^{\delta(i,j)+\delta(r,s)}}$
and others above, we have
\begin{eqnarray*}
& &\sum_{L\in\Omega}\frac{\partial M_{K,L}}{\partial Z_J}W_{I,L}= -\frac{\sqrt{-1}}{2}\sum_{L=(a,b)\in\Omega}
\{\sigma_{(p,a),(r,s)}Y_{qb}+\sigma_{(q,b),(r,s)}Y_{pa}\\
 & &+\sigma_{(q,a),(r,s)}Y_{pb}+\sigma_{(p,b),(r,s)}Y_{qa}\}W_{I,L}\\
&=&-\frac{\sqrt{-1}}{2^{1+\delta(i,j)}}\Bigg\{\sum_{1\leq a\leq g}\sigma_{(p,a),(r,s)}\delta(q,j)R_{ia}
+\sum_{1\leq b\leq g}\sigma_{(p,b),(r,s)}\delta(q,i)R_{jb}  \\
& &+\sum_{1\leq a\leq g}\sigma_{(q,a),(r,s)}\delta(p,i)R_{ja}
+\sum_{1\leq b\leq g}\sigma_{(q,b),(r,s)}\delta(p,j)R_{ib}\Bigg\}.
\end{eqnarray*}
While
\begin{eqnarray*}
% \nonumber to remove numbering (before each equation)
\nonumber \sum_{1\leq a\leq g}\sigma_{(p,a),(r,s)}\delta(q,j)R_{ia}&=& \sum_{1\leq a\leq g}\{\delta(p,r)\delta(a,s)+\delta(p,s)\delta(a,r)
 \\
\nonumber & &-\delta(p,r)\delta(a,s)\delta(p,s)\delta(a,r)\}\delta(q,j)R_{ia}\\
 \nonumber &=&
\delta(q,j)\{\delta(p,r)R_{is}+\delta(p,s)R_{ir}-\delta(p,r)\delta(p,s)R_{is}\},\\
 \nonumber \sum_{1\leq b\leq g}\sigma_{(p,b),(r,s)}\delta(q,i)R_{jb}&=&\delta(q,i)
 \{\delta(p,r)R_{js}+\delta(p,s)R_{jr}-\delta(p,r)\delta(p,s)R_{js}\},\\
\nonumber \sum_{1\leq a\leq g}\sigma_{(q,a),(r,s)}\delta(p,i)R_{ja}&=&\delta(p,i)\{\delta(q,r)R_{js}+
\delta(q,s)R_{jr}-\delta(q,r)\delta(q,s)R_{js}\},\\
\nonumber \sum_{1\leq b\leq
g}\sigma_{(q,b),(r,s)}\delta(p,j)R_{ib}&=&\delta(p,j)\{\delta(q,r)R_{is}+\delta(q,s)R_{ir}-\delta(q,r)\delta(q,s)R_{is}\}.
\end{eqnarray*}
Combining these equalities together, we have
\begin{eqnarray*}
% \nonumber to remove numbering (before each equation)
\nonumber \sum_{L\in\Omega}\frac{\partial M_{K,L}}{\partial
Z_J}W_{I,L}&=& -\frac{\sqrt{-1}}{2^{1+\delta(i,j)}}\{\delta(q,j)\{\delta(p,r)R_{is}+\delta(p,s)R_{ir}-\delta(p,r)\delta(p,s)R_{is}\} \\
\nonumber & &+\delta(q,i)\{\delta(p,r)R_{js}+\delta(p,s)R_{jr}-\delta(p,r)\delta(p,s)R_{js}\}  \\
\nonumber & &+\delta(p,i)\{\delta(q,r)R_{js}+\delta(q,s)R_{jr}-\delta(q,r)\delta(q,s)R_{js}\} \\
\nonumber & &+\delta(p,j)\{\delta(q,r)R_{is}+\delta(q,s)R_{ir}-\delta(q,r)\delta(q,s)R_{is}\}\} \\
\end{eqnarray*}
Similarly, we have
\begin{eqnarray*}
\nonumber \sum_{L\in\Omega}\frac{\partial M_{K,L}}{\partial
Z_I}W_{J,L}&=& -\frac{\sqrt{-1}}{2^{1+\delta(r,s)}}\{\delta(q,s)\{\delta(p,i)R_{rj}+\delta(p,j)R_{ri}-\delta(p,i)\delta(p,j)R_{rj}\} \\
\nonumber & &+\delta(q,r)\{\delta(p,i)R_{sj}+\delta(p,j)R_{si}-\delta(p,i)\delta(p,j)R_{sj}\}  \\
\nonumber & &+\delta(p,r)\{\delta(q,i)R_{sj}+\delta(q,j)R_{si}-\delta(q,i)\delta(q,j)R_{sj}\} \\
\nonumber &
&+\delta(p,s)\{\delta(q,i)R_{rj}+\delta(q,j)R_{ir}-\delta(q,i)\delta(q,j)R_{rj}\}\}
\end{eqnarray*}
Finally we get
\begin{lem}\label{lemma:last}
\begin{eqnarray*}
% \nonumber to remove numbering (before each equation)
\nonumber \Gamma_{I,J}^K &=&
-\frac{1}{2}\left(\sum_{L\in\Omega}\frac{\partial M_{K,L}}{\partial
Z_J}W_{I,L} +\sum_{L\in\Omega}\frac{\partial M_{K,L}}{\partial
Z_I}W_{J,L}\right) \\
\nonumber &=& \frac{\sqrt{-1}}{2^{2+\delta(i,j)}}\{\delta(q,j)\{\delta(p,r)R_{is}+\delta(p,s)R_{ir}-\delta(p,r)\delta(p,s)R_{is}\} \\
\nonumber & &+\delta(q,i)\{\delta(p,r)R_{js}+\delta(p,s)R_{jr}-\delta(p,r)\delta(p,s)R_{js}\}  \\
\nonumber & &+\delta(p,i)\{\delta(q,r)R_{js}+\delta(q,s)R_{jr}-\delta(q,r)\delta(q,s)R_{js}\} \\
\nonumber & &+\delta(p,j)\{\delta(q,r)R_{is}+\delta(q,s)R_{ir}-\delta(q,r)\delta(q,s)R_{is}\}\} \\
\nonumber & &+  \frac{\sqrt{-1}}{2^{2+\delta(r,s)}}\{\delta(q,s)\{\delta(p,i)R_{rj}+\delta(p,j)R_{ri}-\delta(p,i)\delta(p,j)R_{rj}\} \\
\nonumber & &+\delta(q,r)\{\delta(p,i)R_{sj}+\delta(p,j)R_{si}-\delta(p,i)\delta(p,j)R_{sj}\}  \\
\nonumber & &+\delta(p,r)\{\delta(q,i)R_{sj}+\delta(q,j)R_{si}-\delta(q,i)\delta(q,j)R_{sj}\} \\
\nonumber
 &&+\delta(p,s)\{\delta(q,i)R_{rj}+\delta(q,j)R_{ir}-\delta(q,i)\delta(q,j)R_{rj}\}\}
\end{eqnarray*}
\end{lem}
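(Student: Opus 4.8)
The plan is to start from the Christoffel-symbol formula (\ref{formula}) and exploit the block structure of the metric. Writing the real coordinates as $u^{N(i,j)}=Z_{ij}$ and $u^{N(g,g)+N(i,j)}=\overline Z_{ij}$, the invariant metric $ds^2=\Tr(Y^{-1}dZ\,Y^{-1}d\bar Z)$ pairs only holomorphic differentials with anti-holomorphic ones, so its Gram matrix is block anti-diagonal, $G=\left(\begin{smallmatrix}0&W\\ W&0\end{smallmatrix}\right)$, and its inverse is $\widehat G=\left(\begin{smallmatrix}0&M\\ M&0\end{smallmatrix}\right)$ with $M=W^{-1}$ supplied by the preceding lemma. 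For purely holomorphic indices $I,J,K\in\Omega$ the entry $G_{IJ}$ vanishes, so the term $\partial G_{\alpha\beta}/\partial u^\rho$ in (\ref{formula}) drops out and only the $M$-block of $\widehat G$ contributes; this already reduces the formula to $\Gamma_{I,J}^K=\tfrac12\sum_{L\in\Omega}M_{K,L}\bigl(\partial W_{I,L}/\partial Z_J+\partial W_{J,L}/\partial Z_I\bigr)$.

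The decisive step is to avoid differentiating $W$ directly, since $W$ is built from $R=Y^{-1}$ whose derivatives are unwieldy. Instead I would differentiate the defining identities $\sum_L M_{K,L}W_{I,L}=\delta(K,I)$ and $\sum_L M_{K,L}W_{J,L}=\delta(K,J)$ with respect to $Z_J$ and $Z_I$ respectively. This trades every derivative of $W$ for a derivative of $M$, giving $\Gamma_{I,J}^K=-\tfrac12\bigl(\sum_L(\partial M_{K,L}/\partial Z_J)W_{I,L}+\sum_L(\partial M_{K,L}/\partial Z_I)W_{J,L}\bigr)$. The payoff is that $M_{K,L}=Y_{pa}Y_{qb}+Y_{qa}Y_{pb}$ is \emph{quadratic} in the entries of $Y=\tfrac1{2\sqrt{-1}}(Z-\bar Z)$, so $\partial Y_{pa}/\partial Z_{rs}=-\tfrac{\sqrt{-1}}2\,\sigma_{(p,a),(r,s)}$ and the product rule yields exactly four clean monomials.

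Finally I would carry out the contraction against $W_{I,L}=\tfrac{R_{ir}R_{js}+R_{jr}R_{is}}{2^{\delta(i,j)+\delta(r,s)}}$, summing over $L=(a,b)\in\Omega$ and collapsing the inner sums by the orthogonality relation $\sum_r Y_{ir}R_{rp}=\delta(i,p)$ coming from $R=Y^{-1}$. Each $\sigma$-symbol must be expanded through $\sigma_{(p,a),(r,s)}=\delta_{(p,a),(r,s)}+\delta_{(p,a),(s,r)}-\delta_{(p,a),(r,s)}\delta_{(p,a),(s,r)}$ so that the symmetry $Z_{rs}=Z_{sr}$ is respected without double-counting on the diagonal; this is precisely what produces the paired $R_{is},R_{ir}$ terms together with the subtracted $\delta(p,r)\delta(p,s)$-corrections. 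Adding the two symmetric sums, multiplying by $-\tfrac12$, and absorbing the sign $-\tfrac12\cdot(-\sqrt{-1})=\tfrac{\sqrt{-1}}2$ into the $2^{1+\delta(i,j)}$ and $2^{1+\delta(r,s)}$ denominators to form $2^{2+\delta(i,j)}$ and $2^{2+\delta(r,s)}$ yields the stated expression. I expect the only real obstacle to be combinatorial rather than conceptual: keeping the four-fold index bookkeeping and the $\sigma$-versus-$\delta$ corrections consistent so that no diagonal contribution is counted twice, since the geometry has already been fully reduced to algebra by the derivative-trading identity.
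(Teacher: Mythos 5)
Your proposal is correct and follows essentially the same route as the paper: the same reduction via the block anti-diagonal metric, the same key trick of differentiating the identities $\sum_L M_{K,L}W_{I,L}=\delta(K,I)$ to trade derivatives of $W$ for derivatives of $M$, and the same contraction using $\partial Y_{pa}/\partial Z_{rs}=-\tfrac{\sqrt{-1}}{2}\sigma_{(p,a),(r,s)}$, the $\sigma$-versus-$\delta$ expansion, and the orthogonality $\sum_r Y_{ir}R_{rp}=\delta(i,p)$. The sign and power-of-two bookkeeping you describe matches the paper's computation exactly.
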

Using the lemma \ref{lemma:last} above, one can easily show that in the case
$K=(p,q)=(1,1),I=(i,j)=(1,j),J=(r,s)=(1,s)$, we have
\begin{itemize}
  \item if $j=s=1$, then $\Gamma_{I,J}^K=\sqrt{-1}R_{1,1}$;
  \item if $j=1,s\neq 1$, then $\Gamma_{I,J}^K=\sqrt{-1}R_{1,s}$;
  \item if $j\neq 1,s\neq 1$, then $\Gamma_{I,J}^K=\sqrt{-1}R_{j,s}=\sqrt{-1}R_{s,j}$.
\end{itemize}
In general case, if $K=(p,q),I=(i,j),J=(r,s)$, and if both
$(Z_{ij},Z_{rs})$ and $(Z_{rs},Z_{ij})$ do not belong to
$\{Z_{1p},Z_{2p},\cdots,Z_{gp}\}\times\{Z_{1q},Z_{2q},\cdots,Z_{gq}\}$, then
all terms in the last equality of lemma \ref{lemma:last} are zero, hence
$\Gamma_{I,J}^K=\Gamma_{J,I}^K=0$.

If $p=q$, then
\begin{eqnarray*} \Gamma_{I,J}^{(p,p)}&=&\frac{\sqrt{-1}}{2^{1+\delta(i,j)}}\{\delta(p,i)\{\delta(p,r)R_{js}+
\delta(p,s)R_{jr}-\delta(p,r)\delta(p,s)R_{jr}\}  \\
\nonumber &&+\delta(p,j)\{\delta(p,r)R_{is}+\delta(p,s)R_{ir}-\delta(p,r)\delta(p,s)R_{ir}\}\} \\
\nonumber &&+\frac{\sqrt{-1}}{2^{1+\delta(r,s)}}\{\delta(p,r)\{\delta(p,i)R_{sj}+\delta(p,j)R_{si}-\delta(p,i)\delta(p,j)R_{sj}\}  \\
&&+\delta(p,s)\{\delta(p,i)R_{rj}+\delta(p,j)R_{ri}-\delta(p,i)\delta(p,j)R_{rj}\}\}.
\end{eqnarray*}
If $Z_{ij}=Z_{ji}$ and $Z_{rs}=Z_{sr}$ belong to the same row or
column with $Z_{pp}$, then $i=r=p$, or $i=s=p$, or $j=r=p$, or $j=s=p$.
\begin{itemize}
  \item If $i=r=p$, one can use the formula above to show that $\Gamma_{I,J}^K=\sqrt{-1}R_{js}$
  \item If $i=s=p$, then $\Gamma_{I,J}^K=\sqrt{-1}R_{jr}$.
  \item If $j=r=p$, then $\Gamma_{I,J}^K=\sqrt{-1}R_{is}$.
  \item If $j=s=p$, then $\Gamma_{I,J}^K=\sqrt{-1}R_{ir}$.
\end{itemize}

If $p<q$, we may assume that $Z_{ij}(i\leq j)$ belong to the same row with
$Z_{pq}$ and $Z_{rs}(r\leq s)$ belongs to the same column with
$Z_{pq}$ (Other cases can be proved in the same way). Then $i=p\leq j$, $r\leq s=q$ and
\begin{eqnarray*}
% \nonumber to remove numbering (before each equation)
\nonumber \Gamma_{I,J}^{K}&=&\frac{\sqrt{-1}}{2^{2+\delta(i,j)}}(\delta(q,j)\delta(p,r)R_{is}+R_{jr}+\delta(p,j)R_{ir})  \\
\nonumber
&&+\frac{\sqrt{-1}}{2^{2+\delta(r,s)}}(\delta(p,r)\delta(q,j)R_{si}+R_{rj}+\delta(q,r)R_{sj}).
\end{eqnarray*}
\begin{itemize}
  \item If $i=p=j\leq q$ and $r<s=q$, then
  \[\Gamma_{I,J}^K=\frac{\sqrt{-1}}{8}(R_{jr}+R_{ir})+\frac{\sqrt{-1}}{4}R_{rj}=\frac{\sqrt{-1}}{2}R_{jr}.\]

  \item If $i=p=j$ and $r=s=q$, then
  $\Gamma_{I,J}^K=\frac{\sqrt{-1}}{2}R_{jr}$.

  \item If $i=p<j$ and $r<s=q$, then
  \begin{eqnarray*}
  \Gamma_{I,J}^K&=& \frac{\sqrt{-1}}{4}(\delta(q,j)\delta(p,r)R_{is}+R_{jr})+ \frac{\sqrt{-1}}{4}(\delta(p,r)\delta(q,j)R_{is}+R_{rj})\\
    &=&\frac{\sqrt{-1}}{2}(\delta(q,j)\delta(p,r)R_{pq}+R_{jr})\\
    &=&\left\{\begin{array}{ll}
\sqrt{-1}R_{pq} &\mbox{ if } j=q\text{ and }r=p,\\
\frac{\sqrt{-1}}{2}R_{jr} &\text{ otherwise}.
\end{array}\right.
  \end{eqnarray*}

\item If $i=p<j$ and $r=s=q$, then $\Gamma_{I,J}^K=\frac{\sqrt{-1}}{2}R_{jr}$.
\end{itemize}
At last, we complete the proof of Lemma 2.1.

\end{document}